\DeclareMathAlphabet{\mathcal}{OMS}{cmsy}{m}{n} %
\newtheorem{theorem}{Theorem}[section]
\newtheorem{lemma}{Lemma}[section]
\newtheorem{proposition}{Proposition}[section]
\newtheorem{corollary}{Corollary}[section]
\theoremstyle{definition}
\newtheorem{definition}{Definition}[section]
\theoremstyle{remark}
\newtheorem{remark}{Remark}[section]
\numberwithin{equation}{section}
\newcommand{\largelem}{L{\footnotesize EMMA} }
\newcommand{\largethm}{T{\footnotesize HEOREM} }
\def \B{\mathcal{B}}
\def \C{\mathbbm{C}}
\def \F{\mathcal{F}}
\def \H{\mathcal{H}}
\def \N{\mathbbm{N}}
\def \R{\mathbbm{R}}
\def \S{\mathcal{S}}
\def \Z{\mathbbm{Z}}
\def \defby{\vcentcolon =}
\newcommand\given{\nonscript\:\delimsize\vert\nonscript\:\mathopen{}}
\DeclarePairedDelimiterX\set[1]\{\}{%
	#1
}
\DeclarePairedDelimiter{\norm}{\lVert}{\rVert}
\DeclarePairedDelimiter{\abs}{\lvert}{\rvert}
\DeclarePairedDelimiter{\vertiii}{\lvert\kern-0.25ex\lvert\kern-0.25ex\lvert}
{\rvert\kern-0.25ex\rvert\kern-0.25ex\rvert}
\DeclarePairedDelimiterX{\inp}[2]{\langle}{\rangle}{#1, #2}
\newcommand{\conj}[1]{\overline{#1}}
\def \supp{\operatorname{supp}}
\newcommand{\Supp}{\operatorname{Supp}}
\renewcommand{\epsilon}{\varepsilon}
\renewcommand{\Re}{\operatorname{Re}}
\newcommand{\indic}{\mathbbm{1}}
\newcommand{\torus}{\mathbbm{T}^2}
\newcommand{\wickproduct}{:\!\!\! \times \!\!\!: \,}
\newcommand{\wick}[1]{\vcentcolon\kern-0.4ex#1\kern-0.5ex\vcentcolon}
\newcommand{\rconst}{\mathcal{R}}
\newcommand{\conv}{\star}
\newcommand{\kernel}{\mathcal{K}}
\newcommand{\green}{\mathcal{G}}
\newcommand{\contisp}{\mathcal{C}}
\newcommand{\besov}{\mathcal{B}}
\newcommand{\zset}{\mathscr{Z}}
\newcommand{\sobolev}{\mathcal{H}}
\newcommand{\paral}{\varolessthan}
\newcommand{\parag}{\varogreaterthan}
\newcommand{\resonant}{\varodot}
\def \P{\mathbbm{P}}
\def \expect{\mathbbm{E}}
\newcommand{\Law}{\operatorname{Law}}
\newcommand*{\TitleFont}{%
      \usefont{\encodingdefault}{\rmdefault}{b}{n}%
      \fontsize{16}{20}%
      \selectfont}
\title{\TitleFont Characterization of the support for Wick powers of the additive stochastic heat equation}
\author{Toyomu Matsuda}
\affil{{\small Graduate School of Mathematics, Kyushu University, \\
744 Motooka, Nishi-ku, Fukuoka 819-0395, Japan \\
email address: ma218004@math.kyushu-u.ac.jp}}
\date{}
\begin{document}
\maketitle

\begin{abstract}
Let $Z$ be the stationary solution of the additive stochastic heat equation
\begin{equation*}
  \partial_t Z = (\Delta - 1) Z + \xi \quad \mbox{on } \torus = \R^2 / \Z^2,
\end{equation*}
where $\xi$ is the space-time white noise.
The aim of this paper is to determine the support of Wick powers $\{Z^{:k:}\}_{k=1}^{\infty}$.
This leads to an elementary proof of a support theorem for the dynamic $P(\Phi)_2$ equation.
In addition, we show that the approach can be used to determine the support of the law of the Gaussian multiplicative chaos
in the $L^2$-phase.
\vspace{0.5em}\\
\emph{Key words}: Support theorem, Additive stochastic heat equation, Wick powers, Gaussian multiplicative chaos
\end{abstract}

\section{Introduction}
Characterizing the support of the law of a random variable is a fundamental problem in probability theory.
In stochastic analysis, this characterization is often called a ``support theorem", which dates back to a seminal work
by Stroock and Varadhan \cite{SV72} in the context of stochastic differential equations (SDEs).
Since then, a large amount of literature has been devoted to support theorems in various contexts,
including stochastic partial differential equations (SPDEs). The most relevant to this paper is the work by
Ledoux, Qian and Zhang \cite{LQZ02}. They used Lyons' rough path theory \cite{Lyons98} to observe that
the support of the law of an SDE can be specified by characterizing the support of the enhanced noise.

Another interesting topic in stochastic analysis is research on singular SPDEs. One of the simplest example of singular SPDEs,
which we deal with in this paper, is the dynamic $P(\Phi)_2$ equation
\begin{equation}\label{eq:P_Phi}
  \partial_t \Phi = (\Delta - 1) \Phi - \sum_{k=0}^N a_k \Phi^{k} + \xi \quad \mbox{on } \torus = \R^2 / \Z^2,
\end{equation}
where $N$ is odd, $a_N > 0$ and $\xi$ is the space-time white noise on $\R \times \torus$.
This equation was first solved by Da Prato and Debussche \cite{DD03}.
Mourrat and Weber \cite{MW2dim} and Tsatsoulis and Weber \cite{TW18} brought a rough path point of view
to the $P(\Phi)_2$ equation in the course of recent developments of singular SPDEs initiated by
Hairer \cite{Hai14} and Gubinelli, Imkeller and Perkowski \cite{GIP15}.

Since recent theories of singular SPDEs are inspired by rough path theory, it is natural to expect that
support theorems extends to the setting of singular SPDEs in the spirit of \cite{LQZ02}. This direction of research was first
carried out by Chouk and Friz \cite{CF18}. They succeeded in characterizing the support of the law of the generalized
parabolic Anderson equation in two dimensions. Their idea was imported by Tsatsoulis and Weber \cite{TW18} to
the support description of the dynamic $\Phi^4_2$ equation, which then leads to a proof of exponential ergodicity of
the dynamics. However, extension to the general $P(\Phi)_2$ equation was hampered by a technical difficulty which
will be explained now.

According to \cite{TW18}, the solution of the dynamic $P(\Phi)_2$ equation is a continuous function of
$(Z^{:k:})_{k=1}^N$, where $Z$ is a solution of the additive stochastic heat equation
\begin{equation*}
  \partial_t Z = (\Delta - 1) Z + \xi
\end{equation*}
and $Z^{:k:}$ is the $k$ th Wick power of $Z$. In view of \cite{LQZ02}, the support theorem for the dynamic
$P(\Phi)_2$ equation reduces to characterizing the support of the law of $(Z^{:k:})_{k=1}^N$.
In \cite{TW18}, the authors show that the support of the law of $(Z^{:k:})_{k=1}^3$ is the closure of
\begin{equation*}
  \set{(H_k(h, R))_{k=1}^3 \given h \in \H \mbox{ and } R \geq 0},
\end{equation*}
where $H_k$ is a $k$ th Hermite polynomial and $\H$ is the Cameron-Martin space of $Z$.
That the former is included in the latter is an easy consequence of smooth approximations of the noise $\xi$, which
corresponds to a Wong-Zakai approximation in the context of SDEs. This inclusion obviously extends to the case
of general $N$.
The difficult part is to prove the other inclusion.
The crucial step, motivated by \cite{CF18}, is to construct a sequence $\{h_n\}_{n=1}^{\infty}$
of smooth functions such that, if we set
\begin{equation*}
  T_{-h_n - Z_n} Z^{:k:} \defby \sum_{l=0}^k \binom{k}{l} Z^{:k:} (-h_n - Z_n)^{k-l},
\end{equation*}
where $Z_n$ is a smooth approximation of $Z$, the shifted driver $(T_{-h_n-Z_n} Z^{:k:})_{k=1}^3$
converges to $(H_k(0, R))_{k=1}^3$.
When $N=3$, they constructed $\{h_n\}$ explicitly. However, explicit construction of $\{h_n\}$ for general $N$ seems impossible.
This prevents them from determining the support of the law of $(Z^{:k:})_{k=1}^{N}$ for general $N$.

Surprisingly, Hairer and Schﾃｶnbauer \cite{HS19} proved support theorems in a general framework of singular SPDEs.
In particular, they proved support theorems for the dynamic $P(\Phi)_2$ equation and for the dynamic
$\Phi^4_3$ equation, the three dimensional version of the dynamic $\Phi^4_2$ equation.
However, their work is based on state-of-the-art theory of regularity structures, which seems too heavy
for the $P(\Phi)_2$ equation. In addition, their work does not directly determine the support of
$(Z^{:k:})_{k=1}^{\infty}$, which is of independent interest.

The aim of this paper is to complete the program of \cite{TW18} by characterizing the support of $(Z^{:k:})_{k=1}^{\infty}$.
Namely, we prove that the support of the law of $(Z^{:k:})_{k=1}^{\infty}$ is the closure of
\begin{equation*}
  \set{ (H_k(h, R))_{k=1}^{\infty} \given h \in \H \mbox{ and } R \geq 0}.
\end{equation*}
See Theorem \ref{thm:main}.
Our approach only uses elementary tools from Besov space theory.

As the distribution of $Z(t)$ is a massive Gaussian free field, the support theorem for $Z$
offers similar results for the Gaussian free field. In the final part of this paper, we go further by giving a
support theorem for a Gaussian multiplicative chaos, a random measure formally viewed as an exponential of a Gaussian free field.
Indeed, we prove that the law of the Gaussian multiplicative chaos has the full support in the space of measures on $\torus$
 (Theorem \ref{thm:support_of_gmc}).
However, we prove this under the assumption that the Gaussian multiplicative chaos has the second moment.

\subsection{Outline}
In Section \ref{sec:preliminaries}, we list some definitions and results from Besov space theory, Hermite polynomials
and the additive stochastic heat equation.
Section \ref{sec:support} is the main part of this paper. It begins with the statement of the main theorem
(Theorem \ref{thm:main}). In \ref{subsec:technical} we prove some technical lemmas and in \ref{subsec:proof_of_main_thm}
we proceed to the proof of the main theorem.
In \ref{subsec:complex} we briefly discuss its complex counterpart.
In Section \ref{sec:gmc}, we move our attention to Gaussian multiplicative chaos and characterize the support
of its law (Theorem \ref{thm:support_of_gmc}). In Appendix \ref{sec:estimate_of_she}, we provide technical estimates of the additive stochastic heat equation
which are used in Section \ref{sec:support}.
\subsection{Notations}\label{subsec:notations}
\begin{enumerate}[(i)]
  \item $\N \defby \{0, 1, 2, \ldots\}$ and $\R_+ \defby [0, \infty)$.
  \item  $(2k-1)!! \defby (2k-1) (2k-3) \cdots 1$ and $(2k)!! \defby (2k) (2k-2) \cdots 2$.
  \item $\torus \defby \R^2 / \Z^2$ is a two-dimensional torus. We set
  $e_m(x) \defby e^{2 \pi i m \cdot x}$ for $m \in \Z^2$ and $x \in \torus$.
  \item We set
  \begin{equation*}
    \Pi_n (\sum_{m \in \Z^2} a_m e_m ) \defby \sum_{\abs{m} \leq n} a_m e_m, \qquad (a_m \in \C).
  \end{equation*}
  \item We set
  \begin{equation*}
    \inp{f}{g} \defby \int_{\torus} f(x) \conj{g(x)} dx,
  \end{equation*}
  and
  \begin{equation*}
    \F f(m) \defby \hat{f}(m) \defby \int_{\torus} f(x) e^{-2 \pi i m \cdot x} dx.
  \end{equation*}
  \item For a random variable $X$, we denote by $\Law(X)$ the probability measure given by
  $\Law(X)(A) \defby \P(X \in A)$.
  \item Given a measure $\mu$ on a topological space $E$, we denote by $\Supp(\mu)$ the set
  \begin{equation*}
    \set{x \in E \given \mu(U) > 0 \mbox{ for every neighborhood } U \mbox{ of } x}.
  \end{equation*}
  \item $\contisp^{\gamma} = \B^{\gamma}_{\infty, \infty}$ is the Besov space of regularity $\gamma$ on $\torus$.
  See Section \ref{subsec:besov}.
  \item $H_k$ is a $k$-th Hermite polynomial. See Section \ref{subsec:hermite}.
  \item We write $A \lesssim B$ if there exists a constant $C \in (0, \infty)$ such that $A \leq C B$.
  When we emphasize that $C$ depends on parameters $a, b, \ldots$, we write $A \lesssim_{a, b, \ldots} B$.
\end{enumerate}

\section{Preliminaries}\label{sec:preliminaries}
\subsection{Besov spaces}\label{subsec:besov}
We refer the reader to \cite{Bahouri2011} for thorough treatment of Besov space theory.
Presented here is a minimum of Besov space theory which will be used.

We fix smooth, radial functions
$\chi_{-1}$, $\chi :\R^2 \to [0, 1]$ which satisfy
\begin{align*}
  &\supp(\chi_{-1}) \subset B(0, 4/3), \\
  &\supp(\chi) \subset B(0, 8/3) \setminus B(0, 3/4), \\
  &\chi_{-1} + \sum_{k = 0}^{\infty} \chi(\cdot/2^k) = 1.
\end{align*}
For $k \in \N$, we set $\chi_{k} \defby \chi(\cdot/2^k)$ and let $\eta_k$ be the inverse Fourier transform of $\chi_k$.
For a distribution $f$ on the torus $\torus$, we set
\begin{equation*}
  \Delta_k(f)(x) \defby f\Big(\sum_{m \in \Z^2} \eta_k(x + m - \cdot)\Big).
\end{equation*}
\begin{definition}
  For $p, q \in [1, \infty]$ and $\alpha \in \R$, we define
  \begin{equation*}
    \norm{f}_{\B^{\alpha}_{p, q}} \defby \norm{(2^{\alpha k} \norm{\Delta_k f}_{L^p(\torus)})_{k=-1}^{\infty}}_{l^q}.
  \end{equation*}
  The Besov space $\B^{\alpha}_{p, q}$ is the completion of $C^{\infty}(\torus)$ under $\norm{\cdot}_{\besov_{p,q}^{\alpha}}$.
  In particular, we set $\contisp^{\alpha} \defby \besov_{\infty, \infty}^{\alpha}$.
\end{definition}
\begin{definition}
  For $f, g \in C^{\infty}(\torus)$ we define the paraproduct
  \begin{equation*}
    f \paral g \defby g \parag f \defby \sum_{j < k-1} \Delta_j f \Delta_k g
  \end{equation*}
  and the resonance term
  \begin{equation*}
    f \resonant g \defby \sum_{\abs{j-k} \leq 1} \Delta_j f \Delta_k g.
  \end{equation*}
  We have the Bony decomposition
    $fg = f \paral g + f \resonant g + f \parag g$.
\end{definition}
In this paper, we need the following two results.
\begin{proposition}\label{prop:paraproduct_estimates}\leavevmode
  \begin{enumerate}[(i)]
    \item Let $\alpha_1, \alpha_2 \in \R$ and set $\alpha \defby \min\{\alpha_1, 0\} + \alpha_2$.
    Then we have
    \begin{equation*}
    \norm{f \paral g}_{\contisp^{\alpha}} \lesssim_{\alpha_1, \alpha_2}  (\norm{f}_{L^{\infty}} \indic_{\{\alpha_1 \geq 0\}} +
    \norm{f}_{\contisp^{\alpha_1}} \indic_{\{\alpha_1 < 0\}}) \norm{g}_{\contisp^{\alpha_2}}.
    \end{equation*}
    In particular, $f \paral g$ is well-defined for every pair of distributions $(f, g)$.
    \item Let $\alpha_1, \alpha_2 \in \R$ such that $\alpha \defby \alpha_1 + \alpha_2 > 0$.
    Then we have
    \begin{equation*}
      \norm{f \resonant g}_{\contisp^{\alpha}} \lesssim_{\alpha_1, \alpha_2}
      \norm{f}_{\contisp^{\alpha_1}} \norm{g}_{\contisp^{\alpha_2}}.
    \end{equation*}
    In particular, $f \resonant g$ is well-defined for $f \in \contisp^{\alpha_1}$ and
    $g \in \contisp^{\alpha_2}$ with $\alpha_1 + \alpha_2 > 0$.
  \end{enumerate}
\end{proposition}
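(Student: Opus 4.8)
The plan is to reduce both estimates to the definition of the $\contisp^{\alpha} = \besov^{\alpha}_{\infty,\infty}$ norm through the Littlewood--Paley blocks $\Delta_k$, exploiting the spectral localization of the dyadic pieces. Throughout I will rely on only two elementary facts. First, by the very definition of the Besov norm (with $q=\infty$, so the $l^q$ norm is a supremum) one has $\norm{\Delta_k g}_{L^\infty}\leq 2^{-\alpha_2 k}\norm{g}_{\contisp^{\alpha_2}}$, and likewise for $f$. Second, the low-frequency partial sums $\sum_{j<k-1}\Delta_j f$ are bounded on $L^\infty$ uniformly in $k$, since their convolution kernels are fixed dilations of a single Schwartz function and hence have uniformly bounded $L^1$ norm.

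First I would treat the paraproduct (i). The key observation is that, for $j<k-1$, the product $\Delta_j f\,\Delta_k g$ has Fourier support contained in a dyadic annulus of size comparable to $2^k$, the high frequency dominating the low one. Consequently, when $\Delta_l$ is applied to $f\paral g=\sum_{j<k-1}\Delta_j f\,\Delta_k g$, only the finitely many summands with $k\sim l$ survive, so that
\[
  \norm{\Delta_l(f\paral g)}_{L^\infty}\lesssim \sum_{k\sim l}\norm{\Big(\sum_{j<k-1}\Delta_j f\Big)\Delta_k g}_{L^\infty}.
\]
It remains to bound the low-frequency factor, and this is where the case split of the statement enters. When $\alpha_1\geq 0$ I would invoke the uniform $L^\infty$ bound on partial sums to get $\norm{\sum_{j<k-1}\Delta_j f}_{L^\infty}\lesssim\norm{f}_{L^\infty}$; when $\alpha_1<0$ I would instead sum the dyadic bounds, $\sum_{j<k-1}\norm{\Delta_j f}_{L^\infty}\lesssim\sum_{j<k-1}2^{-\alpha_1 j}\norm{f}_{\contisp^{\alpha_1}}\lesssim 2^{-\alpha_1 k}\norm{f}_{\contisp^{\alpha_1}}$, the geometric series being summable precisely because $-\alpha_1>0$. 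Combining either bound with $\norm{\Delta_k g}_{L^\infty}\lesssim 2^{-\alpha_2 k}\norm{g}_{\contisp^{\alpha_2}}$ and $k\sim l$ produces exactly the factor $2^{-\alpha l}$ with $\alpha=\min\{\alpha_1,0\}+\alpha_2$; multiplying by $2^{\alpha l}$ and taking the supremum over $l$ gives (i).

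For the resonance (ii) the only structural change is that, for $\abs{j-k}\leq 1$, the product $\Delta_j f\,\Delta_k g$ is spectrally supported in a \emph{ball} of size $\sim 2^k$ rather than an annulus, since the two comparable frequencies may now cancel. Hence $\Delta_l$ picks up every block with $k\gtrsim l$, and
\[
  \norm{\Delta_l(f\resonant g)}_{L^\infty}\lesssim \sum_{k\gtrsim l}\ \sum_{\abs{j-k}\leq 1}\norm{\Delta_j f}_{L^\infty}\norm{\Delta_k g}_{L^\infty}\lesssim \norm{f}_{\contisp^{\alpha_1}}\norm{g}_{\contisp^{\alpha_2}}\sum_{k\gtrsim l}2^{-(\alpha_1+\alpha_2)k}.
\]
Multiplying by $2^{\alpha l}$ and taking the supremum over $l$ then yields (ii).

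The main obstacle is therefore not any hard estimate but keeping the bookkeeping of spectral supports correct: the annulus-versus-ball distinction between the paraproduct and the resonance is exactly what separates the always well-defined paraproduct from the resonance. Concretely, the hypothesis $\alpha=\alpha_1+\alpha_2>0$ in (ii) is what makes the tail sum $\sum_{k\gtrsim l}2^{-\alpha k}$ converge and be controlled by a constant times $2^{-\alpha l}$; were $\alpha\leq 0$ this sum would diverge and the resonance could fail to define a distribution, which is precisely why no such condition is needed for the paraproduct but is indispensable here.
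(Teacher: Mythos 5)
Your proposal is correct. Note, however, that the paper does not actually prove this proposition: its ``proof'' is the single line ``See [GIP15, Lemma 2.1]'', so there is no internal argument to compare against. What you have written is, in substance, the standard Bony/Littlewood--Paley proof that the cited reference (and, e.g., Bahouri--Chemin--Danchin, Chapter 2) gives: spectral localization of $\big(\sum_{j<k-1}\Delta_j f\big)\Delta_k g$ in an annulus of scale $2^k$ forces $k\sim l$ after applying $\Delta_l$, while the ball-supported resonance blocks force only $k\gtrsim l$, which is exactly where the hypothesis $\alpha_1+\alpha_2>0$ is consumed to sum the tail $\sum_{k\gtrsim l}2^{-(\alpha_1+\alpha_2)k}\lesssim 2^{-\alpha l}$. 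Your case split for the low-frequency factor (uniform $L^1$ bounds on the partial-sum kernels when $\alpha_1\geq 0$, versus summing the geometric series $\sum_{j<k-1}2^{-\alpha_1 j}\lesssim 2^{-\alpha_1 k}$ when $\alpha_1<0$) is also the standard one and accounts correctly for the two terms on the right-hand side of (i). Two small points deserve explicit mention if you write this up fully: on the torus the blocks $\Delta_k$ are given by periodized kernels, but the uniform $L^1$ bounds (hence $\norm{\Delta_l u}_{L^{\infty}}\lesssim\norm{u}_{L^{\infty}}$ and the boundedness of the partial sums) survive periodization; and the ``in particular'' clause of (i) requires the observation that every distribution on $\torus$ has finite order and therefore lies in $\contisp^{\alpha_1}$ for some $\alpha_1<0$, so the estimate indeed makes $f\paral g$ well-defined for arbitrary pairs. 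Neither point affects the validity of your argument.
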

\begin{proof}
  See \cite[\largelem 2.1]{GIP15}.
\end{proof}
\begin{proposition}\label{prop:scaling_in_besov}
  Set $\Lambda_{\lambda} f \defby f(\lambda \cdot)$.
  \begin{enumerate}[(i)]
    \item We have
    \begin{equation*}
      \norm{\Lambda_{\lambda} f}_{\contisp^{\alpha}} \lesssim_{\alpha} \lambda^{\alpha} \norm{f}_{\contisp^{\alpha}}
    \end{equation*}
    for every $\alpha \in (0, \infty)$, $\lambda \in \N \setminus \{0\}$ and $f \in \contisp^{\alpha}$.
    \item Assume $\F f(0) = 0$. Then we have
    \begin{equation*}
      \norm{\Lambda_{\lambda} f}_{\contisp^{\alpha}} \lesssim_{\alpha} \lambda^{\alpha} \norm{f}_{\contisp^{\alpha}}
    \end{equation*}
    for every $\alpha \in \R \setminus\{0\}$, $\lambda \in \N \setminus \{0\}$ and $f \in \contisp^{\alpha}$.
  \end{enumerate}
\end{proposition}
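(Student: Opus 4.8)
The plan is to work entirely on the Fourier side and to exploit that, for integer $\lambda$, the map $x \mapsto \lambda x$ is a surjection of $\torus$ onto itself. Writing $f = \sum_{m} \hat{f}(m) e_m$, one has $\Lambda_{\lambda} f = \sum_m \hat{f}(m) e_{\lambda m}$, so $\Lambda_\lambda f$ is supported on the sublattice $\lambda \Z^2$ with $\F(\Lambda_\lambda f)(\lambda m) = \hat{f}(m)$. The first key observation, and the only place where $\lambda \in \N$ rather than a real scale is used, is that integer dilation preserves the supremum norm: $\norm{\Lambda_\lambda g}_{L^\infty(\torus)} = \norm{g}_{L^\infty(\torus)}$, because $\{\lambda x \bmod 1 : x \in \torus\} = \torus$.

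Next I would compute the Littlewood--Paley blocks of $\Lambda_\lambda f$. A direct computation gives $\Delta_k(\Lambda_\lambda f) = \Lambda_\lambda g_k$ with $g_k \defby \sum_m \chi_k(\lambda m) \hat{f}(m) e_m = \chi_k(\lambda D) f$, whence $\norm{\Delta_k(\Lambda_\lambda f)}_{L^\infty} = \norm{g_k}_{L^\infty}$ by the invariance just noted. The symbol $m \mapsto \chi_k(\lambda m) = \chi(m/(2^k/\lambda))$ is a Littlewood--Paley-type cutoff at the (non-dyadic) scale $2^k/\lambda$; by Poisson summation the torus kernel of $\chi_k(\lambda D)$ is the periodization of the full-space kernel of $\chi(\cdot/s)(D)$, whose $L^1(\R^2)$ norm is independent of $s = 2^k/\lambda$ by scaling, so $\norm{\chi_k(\lambda D)}_{L^\infty \to L^\infty} \le C$ uniformly in $k$ and $\lambda$. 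Since the annulus $\supp \chi_k(\lambda\,\cdot) = \{|m| \sim 2^k/\lambda\}$ meets $\supp \chi_j = \{|m| \sim 2^j\}$ only for $j$ in a bounded window around $k - \lfloor \log_2 \lambda \rfloor$, I can write $g_k = \sum_{j \in J_k} \chi_k(\lambda D) \Delta_j f$ with $\abs{J_k} \le C$, and conclude $\norm{\Delta_k(\Lambda_\lambda f)}_{L^\infty} \lesssim \max_{j \in J_k} \norm{\Delta_j f}_{L^\infty}$.

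Setting $j_0 \defby \lfloor \log_2 \lambda \rfloor$, the nontrivial blocks have relevant indices $j = k - j_0 + O(1)$, so $2^{\alpha k} \lesssim_\alpha 2^{\alpha j_0} 2^{\alpha j}$, and taking the supremum over $k$ yields $\norm{\Lambda_\lambda f}_{\contisp^\alpha} \lesssim_\alpha 2^{\alpha j_0} \norm{f}_{\contisp^\alpha}$. The elementary bound $2^{\alpha j_0} \lesssim_\alpha \lambda^\alpha$ then holds for every $\alpha \neq 0$, using $2^{j_0} \le \lambda$ when $\alpha > 0$ and $\lambda < 2^{j_0+1}$ when $\alpha < 0$. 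This proves (ii) once the lowest block is controlled: when $\F f(0) = 0$ and $\lambda \ge 2$ one checks directly that $\Delta_{-1}(\Lambda_\lambda f) = 0$, so no block escapes the window argument and the estimate holds for all $\alpha \in \R \setminus \{0\}$. For (i) the only escaping contribution is the constant $\Delta_{-1}(\Lambda_\lambda f) = \F f(0)$, and its weight satisfies $2^{-\alpha} \abs{\F f(0)} \le 2^{-\alpha} \norm{\Delta_{-1} f}_{L^\infty} \le \norm{f}_{\contisp^\alpha} \le \lambda^\alpha \norm{f}_{\contisp^\alpha}$, the last step using $\lambda^\alpha \ge 1$; this is exactly why (i) must restrict to $\alpha > 0$.

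I expect the main obstacle to be the careful frequency bookkeeping rather than any deep estimate: one must track the shift $j \approx k - \log_2 \lambda$ uniformly in the non-dyadic ratio $2^k/\lambda$, keep both $\abs{J_k}$ and the operator norms $\norm{\chi_k(\lambda D)}_{L^\infty \to L^\infty}$ bounded independently of $k$ and $\lambda$, and isolate the zero Fourier mode, whose presence is precisely the obstruction to extending (i) to negative $\alpha$ and the reason (ii) requires the hypothesis $\F f(0) = 0$.
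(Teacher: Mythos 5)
Your argument is sound in substance, and it is worth noting that the paper itself gives no proof here at all: it simply cites \cite[Lemma A.4]{GIP15}. So your self-contained Littlewood--Paley argument supplies exactly what the paper outsources, and it is essentially the standard proof of that lemma. The three pillars are all correct: invariance of the $L^{\infty}$ norm under integer dilation of the torus, the uniform $L^{\infty} \to L^{\infty}$ bound for the non-dyadic cutoffs $\chi_k(\lambda D)$ obtained by periodizing a scale-invariant $L^1(\R^2)$ kernel, and the identity $\Delta_k(\Lambda_{\lambda} f) = \Lambda_{\lambda}(\chi_k(\lambda D) f)$ that reduces everything to frequency bookkeeping. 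Your isolation of the zero mode also correctly explains why (i) needs $\alpha > 0$ and why (ii) needs $\F f(0) = 0$.

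One step needs an extra sentence, and it matters only in part (ii) with $\alpha < 0$. Your window claim treats $\supp \chi_{-1}$ as an annulus $\{\abs{m} \sim 2^{-1}\}$, but it is the ball $B(0, 4/3)$: it meets $\supp \chi_k(\lambda \cdot)$ for \emph{every} $k$ with $2^k \lesssim \lambda$, not only for $k = j_0 + O(1)$. For such pairs $(k, j) = (k, -1)$ with $k \ll j_0$, the inequality $2^{\alpha k} \lesssim_{\alpha} 2^{\alpha j_0} 2^{\alpha j}$ fails when $\alpha < 0$, so the bookkeeping as written does not cover the contribution of $\Delta_{-1} f$ to the low blocks of $\Lambda_{\lambda} f$. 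The fix is your own hypothesis, used a second time: since $\F f(0) = 0$, the only lattice frequencies present in $\Delta_{-1} f$ are $\abs{m} = 1$, and $\chi_k(\lambda m) \neq 0$ with $\abs{m} = 1$ forces $(3/8)\lambda \leq 2^k \leq (4/3)\lambda$, so the $j = -1$ contribution does satisfy $k = j_0 + O(1)$ after all. In other words, $\F f(0) = 0$ is needed not only to kill the block $\Delta_{-1}(\Lambda_{\lambda} f)$, as you state, but also to make the window argument valid for the source block $j = -1$. With that observation added, your proof is complete.
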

\begin{proof}
  See \cite[\largelem A.4]{GIP15}.
\end{proof}
\subsection{Hermite polynomials}\label{subsec:hermite}
We refer the reader to \cite{Jan97} for more about Hermite polynomials.
\begin{definition}\label{def:hermite_polynomials}
  We define Hermite polynomials $\{H_k(x, C)\}_{k \in \N}$ by the identity
  \begin{equation}\label{eq:hermite_identity}
    e^{t x - 2^{-1} C t^2} = \sum_{k=0}^{\infty} \frac{t^k}{k!} H_{k}(x, C).
  \end{equation}
  We set $H_k(x) \defby H_k(x, 1)$.
\end{definition}
\begin{remark}\label{remark:explicit_form_of_hermite}
  We have the explicit representation of $H_k$ given by
  \begin{equation*}
    H_k(x, C) = k! \sum_{l=0}^{2^{-1}k} \frac{(-1)^l C^l}{2^l l! (k-2l)!} x^{k-2l}.
  \end{equation*}
\end{remark}
In this paper, we need the following two elementary identities.
\begin{proposition}\label{prop:hermite_binomial_expansion}
  We have the identity
  \begin{equation*}
    H_k(x + y, C) = \sum_{l=0}^k \binom{k}{l} H_l(x, C) y^{k-l}.
  \end{equation*}
\end{proposition}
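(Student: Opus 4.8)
The plan is to read the identity off the defining generating function \eqref{eq:hermite_identity}. The key observation is that the generating function factorizes under a shift of the first argument: writing $t(x+y) - 2^{-1}Ct^2 = (tx - 2^{-1}Ct^2) + ty$, we obtain
\begin{equation*}
  e^{t(x+y) - 2^{-1}Ct^2} = e^{ty}\, e^{tx - 2^{-1}Ct^2}.
\end{equation*}
By Definition \ref{def:hermite_polynomials}, the left-hand side equals $\sum_{k=0}^{\infty} \frac{t^k}{k!} H_k(x+y, C)$, so it suffices to extract the coefficient of $t^k$ from the right-hand side and compare.

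First I would expand each factor on the right as a power series in $t$: the ordinary exponential gives $e^{ty} = \sum_{j=0}^{\infty} \frac{y^j}{j!} t^j$, and the Hermite generating function gives $e^{tx - 2^{-1}Ct^2} = \sum_{l=0}^{\infty} \frac{H_l(x, C)}{l!} t^l$. Forming the Cauchy product and collecting the terms with $j + l = k$, the coefficient of $t^k$ is $\sum_{l=0}^{k} \frac{y^{k-l}}{(k-l)!}\,\frac{H_l(x, C)}{l!}$. Matching this against the coefficient $\frac{1}{k!} H_k(x+y, C)$ of $t^k$ on the left, multiplying through by $k!$, and recognizing $\frac{k!}{(k-l)!\,l!} = \binom{k}{l}$ yields the claimed identity.

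The only point requiring (minor) justification is the legitimacy of the Cauchy product and of comparing coefficients term by term. This is harmless here: for fixed $x, y, C$ all three exponentials are entire functions of $t$, so their power series converge absolutely on all of $\C$ and may be multiplied and rearranged freely; equivalently, one may argue purely at the level of formal power series in $t$, since each $H_k(\cdot, C)$ is a polynomial and \eqref{eq:hermite_identity} is an identity of formal power series. I therefore do not expect any genuine obstacle: the entire content of the statement is the factorization $e^{t(x+y) - 2^{-1}Ct^2} = e^{ty}\, e^{tx - 2^{-1}Ct^2}$ of the generating function, and the remainder is routine bookkeeping with binomial coefficients.
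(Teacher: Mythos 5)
Your proposal is correct and follows exactly the paper's own argument: the paper likewise writes $e^{t(x+y)-2^{-1}Ct^2} = e^{ty}\, e^{tx - 2^{-1}Ct^2}$, expands both factors via \eqref{eq:hermite_identity} and the exponential series, and compares coefficients of $t^k$. Your additional remarks on the validity of the Cauchy product are fine but routine; the paper leaves them implicit.
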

\begin{proof}
  By \eqref{eq:hermite_identity}, we have
  \begin{equation*}
    \sum_{k=0}^{\infty} \frac{t^k}{k!} H_k(x + y, C) = e^{ty} e^{tx - \frac{Ct^2}{2}}
    = \sum_{k, l=0}^{\infty} \frac{t^{k+l}}{k!l!} H_k(x, C) y^l. \qedhere
  \end{equation*}
\end{proof}
\begin{proposition}\label{prop:hermite_inversion}
  We have the identity
  \begin{equation*}
    x^k = k! \sum_{l=0}^{2^{-1}k} \frac{C^l H_{k-2l}(x, C)}{2^l l! (k-2l)!}.
  \end{equation*}
\end{proposition}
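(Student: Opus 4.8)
The plan is to imitate the generating-function argument that proves Proposition \ref{prop:hermite_binomial_expansion}, since the desired formula is simply the inversion of the defining identity \eqref{eq:hermite_identity}. First I would multiply both sides of \eqref{eq:hermite_identity} by $e^{Ct^2/2}$ in order to isolate the exponential generating function of the monomials. This gives
\begin{equation*}
  \sum_{k=0}^{\infty} \frac{t^k}{k!} x^k = e^{tx} = e^{Ct^2/2} \sum_{j=0}^{\infty} \frac{t^j}{j!} H_j(x, C).
\end{equation*}

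The main step is then to expand $e^{Ct^2/2} = \sum_{l=0}^{\infty} \frac{C^l}{2^l l!} t^{2l}$ and to form the Cauchy product of the two power series on the right-hand side, collecting the coefficient of $t^k$. The pairs $(l, j)$ contributing to the monomial $t^k$ are exactly those with $2l + j = k$, so that $j = k - 2l$ with $l$ ranging over $0 \leq l \leq 2^{-1}k$. Comparing the coefficient of $t^k$ on both sides yields
\begin{equation*}
  \frac{x^k}{k!} = \sum_{l=0}^{2^{-1}k} \frac{C^l}{2^l l!} \frac{H_{k-2l}(x, C)}{(k-2l)!},
\end{equation*}
and multiplying through by $k!$ gives the claimed identity.

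I expect no genuine obstacle here: the proof is a routine comparison of power-series coefficients, entirely parallel to Proposition \ref{prop:hermite_binomial_expansion}. The only points deserving a word of care are that $2^{-1}k$ should be read as $\lfloor k/2 \rfloor$, so that the sum terminates once $k - 2l$ becomes negative, and that the formal manipulation of the generating series is legitimate because, for each fixed $x$ and $C$, all the series involved converge absolutely and locally uniformly in $t$, so that the rearrangement into a Cauchy product is justified.
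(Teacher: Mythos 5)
Your proof is correct and is essentially identical to the paper's: both multiply the defining identity \eqref{eq:hermite_identity} by $e^{Ct^2/2}$, expand that exponential as a power series in $t$, and compare coefficients of $t^k$ in the resulting double series. Your write-up merely makes explicit the coefficient extraction and convergence remarks that the paper leaves implicit.
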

\begin{proof}
  By \eqref{eq:hermite_identity}, we have
  \begin{equation*}
    \sum_{k=0}^{\infty} \frac{t^k}{k!} x^k = e^{\frac{Ct^2}{2}} \sum_{k=0}^{\infty} \frac{t^k}{k!} H_k(x, C)
    = \sum_{k, l=0}^{\infty} \frac{t^{k+2l}}{k!l!} \left(\frac{C}{2}\right)^l H_k(x, C). \qedhere
  \end{equation*}
\end{proof}

\subsection{Additive stochastic heat equation}\label{subsec:she}
Let $\alpha \in (0, \frac{1}{2})$.
The rigorous definitions of the space-time white noise $\xi$ and
the stationary solution of the additive stochastic heat equation $Z$ are given by
the following;
\begin{definition}\label{def:def_of_white_noise_etc}\leavevmode
  \begin{enumerate}[(i)]
    \item The space-time white noise $\xi$ is a centered Gaussian family
    $\set{\xi(u) \given u \in L^2(\R \times \torus)}$ with the property
    \begin{equation*}
      \expect[\xi(u) \xi(v)] = \int_{\R \times \torus} u(t, x) v(t, x) dt dx.
    \end{equation*}
    \item The stationary solution $Z$ of the additive stochastic heat equation is a
    $C(\R_+;$ \\ $\contisp^{-\alpha})$-valued
    random variable such that for every $t \in \R_+$ and $\phi \in C^{\infty}(\torus)$,
    \begin{equation*}
      \inp{Z(t)}{\phi} = \int_{\R \times \torus} \inp{\kernel(t-s, \cdot - y)}{\phi} \xi(dsdy)
    \end{equation*}
    almost surely, where the integral is in the sense of It\^o-Wiener integral and
    \begin{equation*}
      \kernel(t, x) \defby \indic_{\{t \geq 0\}} \sum_{m \in \Z^2} e^{-(1+4\pi^2 \abs{m}^2)t} e_m(x).
    \end{equation*}
    \item We denote by $Z^{:k:}$ a $C(\R_+; \contisp^{-\alpha})$-valued random variable such that for every
    $t \in \R_+$ and $\phi \in C^{\infty}(\torus)$,
    \begin{equation*}
      \inp{Z^{:k:}(t)}{\phi} = \int_{(\R \times \torus)^k} \inp{\prod_{j=1}^k \kernel(t-s_j, \cdot - y_j)}{\phi}
      \prod_{j=1}^k \xi(ds_j dy_j),
    \end{equation*}
    where the integral is in the sense of multiple It\^o-Wiener integral.
  \end{enumerate}
\end{definition}
\begin{remark}\leavevmode
  \begin{enumerate}[(i)]
    \item We refer the reader to \cite[Chapter 7]{Jan97} for stochastic integration with respect to the white noise.
    \item The existence of $Z^{:k:}$ is a consequence of Besov space version of Kolmogorov continuity theorem.
    See \cite[\largelem 9 and \largelem 10]{MW2dim} and \cite[Section 2.1]{TW18}.
    \item Similarly, $\xi$ has a distribution-valued modification.
    \item $Z^{:k:}$ is called $k$ th Wick power of $Z$.
  \end{enumerate}
\end{remark}
\begin{remark}\label{rem:bm_from_white_noise}
  Let $W_m$ be a continuous modification of
  \begin{equation*}
    t \mapsto \int_{\R \times \torus} \indic_{[\min\{0, t\}, \max\{0, t\}]}(s) e_m(y) \xi(dsdy).
  \end{equation*}
  Then $\{W_m\}_{m \in \Z^2}$ are complex Brownian motions and we have
  \begin{equation*}
    \inp{Z(t)}{e_m} = \int_{-\infty}^t e^{-(1+4\pi^2 \abs{m}^2)(t-s)} dW_m(s) \quad \mbox{almost surely}.
  \end{equation*}
\end{remark}
We set $\F^0 \defby \sigma(\xi(u) \vert u \in L^2(\R \times \torus))$ and
denote by $\F$ the usual augmentation of $\F^0$. We set
\begin{equation*}
  H^{:k:} \defby \overline{\set{H_k(\xi(u)) \given \norm{u}_{L^2(\R \times \torus)} = 1}}^{L^2(\Omega, \F, \P)}.
\end{equation*}
Then we have the Wiener chaos decomposition $L^2(\Omega, \F, \P) = \oplus_{k=0}^{\infty} H^{:k:}$.
We denote by $\zeta_1 \wickproduct \zeta_2$ the Wick product of $\zeta_1$ and $\zeta_2$.
See \cite[Chapter 2 and 3]{Jan97} for more detail.

Let $Z_n \defby \Pi_n Z$. See \ref{subsec:notations} for the definition of $\Pi_n$. Then we have
\begin{equation*}
  Z_n(t, x) = \int_{\R \times \torus} \kernel_n(t-s, x-y) \xi(ds dy)
\end{equation*}
almost surely, where $\kernel_n(t) \defby \Pi_n \kernel(t)$.
We define
\begin{align*}
  Z_n^{:k:}(t, x) &\defby \overbrace{Z_n(t, x) \wickproduct \cdots \wickproduct Z_n(t, x)}^{k\mbox{ times}}\\
  &= \int_{(\R \times \torus)^k} \prod_{j=1}^k \kernel_n(t-s_j, x-y_j) \prod_{j=1}^k \xi(ds_j dy_j).
\end{align*}
The second equality follows from the multiplication formula(\cite[\largethm 7.33]{Jan97}).
In addition, if we set $\rconst_n \defby \expect[ Z_n(t, x)^2 ]$, we have
\begin{equation*}
  Z_n^{:k:}(t, x) = H_k(Z_n(t, x), \rconst_n)
\end{equation*}
by \cite[\largethm 3.19]{Jan97}.
As shown in Corollary \ref{cor:fin_dim_approxim_of_she}, we have
\begin{equation*}
  \lim_{n \to \infty} \expect[ \sup_{0 \leq t \leq T} \norm{Z^{:k:}(t) - Z_n^{:k:}(t)}_{\contisp^{-\alpha}}^2] = 0.
\end{equation*}
\begin{remark}\label{rem:space_time_approximation_of_she}
  We similarly have space-time approximations.
  Let $\rho \in \S(\R \times \R^2)$ with $\int_{\R \times \R^2} \rho(t, x) dt dx = 1$ and let
  \begin{equation*}
    \rho_n(t, x) \defby n^4 \rho(n^2t, nx) \quad \mbox{and} \quad
    \tilde{Z}_n(t, x) \defby \int_{\R \times \torus} [\kernel * \rho_n](t-s, x-y) \xi(dsdy).
  \end{equation*}
  Furthermore, we set $\tilde{\rconst}_n \defby \expect[Z_n(t, x)^2]$ and
  $\tilde{Z}_n^{:k:}(t,x) \defby H_k(\tilde{Z}_n(t, x), \tilde{\rconst}_n)$.
  Then, we have
  \begin{equation*}
    \lim_{n \to \infty} \expect[ \sup_{0 \leq t \leq T} \norm{Z^{:k:}(t) - \tilde{Z}_n^{:k:}(t)}_{\contisp^{-\alpha}}^2 ] = 0.
  \end{equation*}
\end{remark}

\section{Support theorem: Wick powers of the stochastic heat equation}\label{sec:support}
Let $Z$ be the stationary solution of the additive stochastic heat equation on the torus $\torus$.
See Definition \ref{def:def_of_white_noise_etc}.
We fix $\alpha \in (0, \frac{1}{2})$.
We set
\begin{equation*}
  \zset \defby C(\R_+; \contisp^{-\alpha})^{\N},
\end{equation*}
which is endowed with the product topology.
We set $\underline{Z} \defby (Z^{:k:})_{k=0}^{\infty} \in \zset$. We also set
\begin{equation*}
  \H \defby \set{h \in C(\R_+; \contisp^1) \given
  h(t) = \int_{-\infty}^t e^{(t-s)(\Delta - 1)} g(s, \cdot) ds \mbox{ for some }
  g \in L^2(\R \times \torus) }.
\end{equation*}
The main theorem of this paper is the following;
\begin{theorem}\label{thm:main}
  We have
  \begin{equation}\label{eq:main}
    \Supp\{\Law(\underline{Z})\} = \overline{\set{(H_k(h, R))_{k=0}^{\infty}
    \given h \in \H, \,\, R \geq 0}}^{\zset}.
  \end{equation}
\end{theorem}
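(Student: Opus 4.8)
The plan is to prove the two inclusions separately, with the translation structure of the Wick powers as the common backbone. For a function $\phi$ define the shift $(T_\phi w)^{:k:} \defby \sum_{l=0}^k \binom{k}{l} w^{:l:}\phi^{k-l}$ on $\zset$; by Proposition \ref{prop:paraproduct_estimates} the product $w^{:l:}\phi^{k-l}$ is continuous in $(w,\phi)\in\contisp^{-\alpha}\times\contisp^1$, so $T_\phi$ is a homeomorphism of $\zset$ with inverse $T_{-\phi}$ whenever $\phi\in C(\R_+;\contisp^1)$. Two facts then drive everything. First, Proposition \ref{prop:hermite_binomial_expansion} with $x=0$, $y=h$, $C=R$ gives $T_h\big((H_k(0,R))_{k}\big) = (H_k(h,R))_{k}$. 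Second, for $h\in\H$ with associated $g\in L^2(\R\times\torus)$, shifting the noise $\xi\mapsto\xi+g$ sends $Z$ to $Z+h$ and hence, again by Proposition \ref{prop:hermite_binomial_expansion}, sends $\underline Z$ to $T_h\underline Z$; since this shift is a Cameron--Martin transformation, $\Law(T_h\underline Z)$ and $\Law(\underline Z)$ are equivalent. Combining the two facts, $\Supp\{\Law(\underline Z)\}$ is invariant under every $T_h$, $h\in\H$.

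For the inclusion $\Supp\{\Law(\underline Z)\}\subseteq\overline{S}$, where $S$ denotes the set on the right of \eqref{eq:main}, I would argue by approximation. By Corollary \ref{cor:fin_dim_approxim_of_she} the truncations $\underline Z_n \defby (H_k(Z_n,\rconst_n))_{k}$ converge to $\underline Z$ in $\zset$ along a subsequence almost surely. Since $Z_n=\Pi_n Z$ is a trigonometric polynomial, a.s.\ it can be approximated in $C(\R_+;\contisp^1)$ by elements $h\in\H$ --- one solves the linear equation backwards and mollifies the $L^2$-driver in time, matching each of the finitely many Fourier modes. Taking $R=\rconst_n\ge 0$, the continuity furnished by Proposition \ref{prop:paraproduct_estimates} gives $(H_k(h,\rconst_n))_{k}\to\underline Z_n$, whence $\underline Z_n\in\overline{S}$ and therefore $\underline Z\in\overline{S}$ almost surely.

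The substantial inclusion is $\overline{S}\subseteq\Supp\{\Law(\underline Z)\}$. By the $T_h$-invariance above, $(H_k(h,R))_{k}=T_h\big((H_k(0,R))_{k}\big)$ lies in the support as soon as $(H_k(0,R))_{k}$ does, so it suffices to place the single point $(H_k(0,R))_{k}$ in the support for each fixed $R\ge 0$. The key point --- and the reason the argument succeeds for all $k$ at once, where \cite{TW18} needed an explicit construction --- is the moment identity $\expect[H_k(G,C)]=H_k(0,C-\sigma^2)$ for $G\sim N(0,\sigma^2)$, an immediate consequence of \eqref{eq:hermite_identity}. I would exploit it as follows: take $\psi_n=\Lambda_{\lambda_n}\phi_n$, the spatial rescaling of a fixed, deterministic profile $\phi_n$ (constant in time) whose empirical value distribution is close to $N(0,\rconst_n-R)$ --- for instance a wide high-frequency band of a fixed Gaussian sample, whose histogram is asymptotically Gaussian. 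Since $\Lambda_{\lambda_n}$ commutes with the constant-coefficient polynomial $H_k(\cdot,\rconst_n)$ and preserves spatial averages, the mean of $H_k(\psi_n,\rconst_n)$ tends to $H_k(0,R)$ for every $k$, while its mean-zero oscillatory part is killed in $\contisp^{-\alpha}$ by Proposition \ref{prop:scaling_in_besov}(ii); hence $H_k(\psi_n,\rconst_n)\to H_k(0,R)$ in $\contisp^{-\alpha}$. Because $Z_n$ is a nondegenerate finite-dimensional Ornstein--Uhlenbeck process (Remark \ref{rem:bm_from_white_noise}), its law has full support, so $\P(\sup_{t\le T}\norm{Z_n(t)-\psi_n}_{\contisp^1}<\delta)>0$; on this event $H_k(Z_n,\rconst_n)$ is close to $H_k(0,R)$.

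The main obstacle is to pass from the truncation $\underline Z_n$ to $\underline Z$ on this positive-probability event, that is, to control the high-frequency remainder. Writing $Z=Z_n+Y$ with $Y=Z-Z_n$, the generating identity \eqref{eq:hermite_identity} together with the independence of the Fourier blocks yields $Z^{:k:}=\sum_{j=0}^k\binom{k}{j}H_j(Z_n,\rconst_n)\,Y^{:k-j:}$, whose $j=k$ term is exactly $H_k(Z_n,\rconst_n)$. The remaining terms pair the large, highly oscillatory factor $H_j(Z_n,\rconst_n)$ with a high-frequency Wick power $Y^{:k-j:}$ of small $\contisp^{-\alpha}$-norm; the delicate step is to estimate these products through Proposition \ref{prop:paraproduct_estimates} and the scaling bounds, choosing the oscillation frequency $\lambda_n$ slowly enough relative to $n$ that the gain in $\norm{Y^{:k-j:}}_{\contisp^{-\alpha}}$ beats the growth of $\norm{H_j(Z_n,\rconst_n)}_{\contisp^{1}}$. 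These are precisely the estimates I would defer to Appendix \ref{sec:estimate_of_she}. Granting them, every neighbourhood of $(H_k(0,R))_{k}$ is charged with positive probability by $\Law(\underline Z)$, which closes the hard inclusion and, combined with the first two paragraphs, yields \eqref{eq:main}.
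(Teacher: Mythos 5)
Your architecture is broadly parallel to the paper's: the $T_h$-invariance of the support via the Cameron--Martin theorem is exactly the paper's Lemma \ref{lem:cameron_martin}, your approximation argument for $\Supp\{\Law(\underline Z)\}\subseteq\overline S$ is the paper's first inclusion (including the need to replace $\Pi_n Z$ by genuine elements of $\H$), and your reduction of the hard inclusion to placing the single point $(H_k(0,R))_{k}$ in the support is the paper's Lemma \ref{lem:key_lemma}. Where you genuinely diverge is the mechanism for that last step: the paper applies a \emph{random} Cameron--Martin shift $T_{-\tilde Z_n - h_n}$, with $\tilde Z_n\in\H$ a smoothed version of $\Pi_n Z$ (its STEP 2), shows $T_{-\tilde Z_n-h_n}\underline Z\to (H_k(0,R))_k$ in probability, and concludes by closedness of the support; you instead invoke full support of the finite-dimensional OU process $Z_n$ to pin it near a deterministic profile with positive probability, and use independence of $Y=Z-Z_n$ from $Z_n$ to handle the cross terms $H_j(Z_n,\rconst_n)Y^{:k-j:}$. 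That mechanism is viable: the two events are independent because they involve disjoint Fourier blocks, and the deferred estimates are of the same type as Lemma \ref{lem:fin_dim_approxim_of_she}.

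The genuine gap is the construction of the profile $\psi_n$, which is the technical heart of the paper (Lemmas \ref{lem:sufficient_condition_for_integral_of_H_m}, \ref{lem:existence_of_nice_f} and \ref{lem:properties_of_h_n}) and which you dispose of in one sentence. Write $\phi_n=\sqrt{C_n}\,u_n$ with $C_n=\rconst_n-R\sim\log n$. Since $H_k(y,C_n+R)=\sum_{j=0}^k\binom{k}{j}H_j(y,C_n)H_{k-j}(0,R)$ and $H_j(\sqrt{C_n}\,u,C_n)=C_n^{j/2}H_j(u)$, the spatial mean of $H_k(\psi_n,\rconst_n)$ equals $H_k(0,R)+\sum_{j\geq 1}\binom{k}{j}H_{k-j}(0,R)\,C_n^{j/2}\int_{\torus}H_j(u_n(x))\,dx$, so your claim that this mean tends to $H_k(0,R)$ requires $\int_{\torus}H_j(u_n)\,dx=o\bigl((\log n)^{-j/2}\bigr)$ for every $1\leq j\leq N$. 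An ``empirical value distribution close to $N(0,1)$'' with no rate is not enough: any fixed nonzero error in the $j$-th Hermite moment is amplified by $C_n^{j/2}\to\infty$, and the term then diverges rather than vanishes. Your proposed fix (an increasingly wide band of a fixed Gaussian sample) could in principle meet this, but only after proving a quantitative almost-sure rate for $\int H_j(u_n)\,dx$ (a variance bound of the order of the inverse band volume plus Borel--Cantelli), together with control of $\norm{u_n}_{\contisp^1}$; none of this appears in your argument. The paper sidesteps the entire issue by producing, via the explicit profile $\sqrt{2\log(1/x_1)}\cos(\pi x_2)$ and the implicit function theorem, a single smooth $f$ with $\int_{\torus}H_j(f)\,dx=0$ \emph{exactly} for $j\leq 2N$, so that the $C_n^{j/2}$ amplification multiplies zero. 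Without either that exact cancellation or a quantified approximation rate, your step ``$H_k(\psi_n,\rconst_n)\to H_k(0,R)$ in $\contisp^{-\alpha}$'' is unproved, and the hard inclusion does not close.
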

Before moving to the proof of Theorem \ref{thm:main},
we briefly explain a corollary about the dynamic $P(\Phi)_2$ equation \eqref{eq:P_Phi}.
See \cite{TW18} for more detail.

We fix $\alpha_0, \beta \in (0, \frac{1}{N})$.
We set
\begin{equation*}
  \hat{Z}^{:k:}(t) \defby \sum_{l=0}^k \binom{k}{l} Z^{:k-l:}(t) (-e^{t(\Delta - 1)} Z(0) )^l.
\end{equation*}

The equation \eqref{eq:P_Phi} can be solved by the Da Prato-Debussche method.
We say $Y$ is the solution of the shifted equation
\begin{equation*}
  \left\{
  \begin{aligned}
    &\partial_t Y = (\Delta - 1)Y - \sum_{k=0}^N a_k \sum_{l=0}^k \binom{k}{l} \hat{Z}^{:k-l:} Y^l, \\
    &Y(0, \cdot) = x
  \end{aligned}
  \right.
\end{equation*}
with the initial condition $x \in \contisp^{-\alpha_0}$ if we have
$Y \in C((0, \infty); \contisp^{\beta})$ and
\begin{equation*}
   Y(t) = e^{t(\Delta - 1)} x - \int_0^t e^{(t-s)(\Delta - 1)} \left( \sum_{k=0}^N a_k \sum_{l=0}^k \binom{k}{l} \hat{Z}^{:k-l:}(s)
    Y^l(s) \right) ds
\end{equation*}
for every $t \in (0, \infty)$.
As shown in \cite[Section 3]{TW18}, the shifted equation has exactly one solution.
Then we call $\Phi \defby \hat{Z} + Y$ the solution of the dynamic $P(\Phi)_2$ equation \eqref{eq:P_Phi}.

As shown in \cite[Section 5]{TW18}, the solution $\Phi = \{\Phi(t)\}_{t \in \R_+}$ defines a
strong Feller process on $\contisp^{-\alpha_0}$. Let $\{P_t\}_{t \in \R_+}$ be the semigroup generated by
$\Phi$ and $P_t^*$ be the adjoint of $P_t$.
We denote by $\norm{\cdot}_{\operatorname{TV}}$ the total variation norm of signed measures on $\contisp^{-\alpha_0}$.
Now we can state a corollary of Theorem \ref{thm:main}.
\begin{corollary}
  Let $\Phi$ be the solution of the dynamic $P(\Phi)_2$ equation.
  Then there exists a unique invariant measure $\mu$ for the semigroup $\{P_t\}_{t \in \R_+}$,
  and there exists $\lambda \in (0, 1)$ such that
  \begin{equation*}
    \norm{P_t^* \nu - \mu}_{\operatorname{TV}} \leq (1-\lambda)^{t} \norm{\nu - \mu}_{\operatorname{TV}}
  \end{equation*}
  for every $t \geq 3$ and probability measure $\nu$ on $\contisp^{-\alpha_0}$.
\end{corollary}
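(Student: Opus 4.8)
The plan is to deduce the corollary by feeding Theorem \ref{thm:main} into the ergodicity scheme of \cite{TW18}. For the dynamic $\Phi^4_2$ equation, Tsatsoulis and Weber obtained exponential ergodicity from three ingredients: the strong Feller property of $\{P_t\}$, a uniform-in-initial-condition a priori bound forcing the solution into a fixed compact set after a short time (the ``coming down from infinity'' estimate), and topological irreducibility extracted from a support theorem. The first two ingredients rest only on the coercivity of the damping $-a_N \Phi^N$ (with $N$ odd and $a_N > 0$) and on the analytic bounds for $Y$, so they carry over to the general $P(\Phi)_2$ equation; the strong Feller property is in fact already recorded in the excerpt (\cite[Section 5]{TW18}). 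The one ingredient that was unavailable for general $N$ is irreducibility, and this is exactly what Theorem \ref{thm:main} now supplies. Thus the task reduces to: (a) extracting irreducibility from Theorem \ref{thm:main}, (b) assembling a uniform Doeblin minorization, and (c) invoking the Doeblin theorem.

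For step (a), recall from \cite[Section 3]{TW18} that $\Phi = \hat{Z} + Y$, where the remainder $Y$ solves the shifted equation and depends continuously on the enhanced driver $(\hat{Z}^{:k:})_{k=0}^N$ and on the initial datum $x \in \contisp^{-\alpha_0}$. Since each $\hat{Z}^{:k:}$ is a continuous image of $\underline{Z}$, Theorem \ref{thm:main} describes the support of the driver: with positive probability the pair (driver, solution) is arbitrarily close to the deterministic trajectory obtained by replacing the driver with any element coming from $h \in \H$ and $R \ge 0$. Letting $h$ range over $\H$ renders the associated deterministic controlled equation approximately controllable — the coercive damping absorbs every initial condition into a fixed ball, and a suitable $h$ then steers the state into any prescribed neighbourhood — so topological irreducibility follows: for every $x$ and every nonempty open $U \subset \contisp^{-\alpha_0}$ one has $P_t(x, U) > 0$ for $t$ large. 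The special choice $h = 0$, $R = 0$, for which $H_k(0,0) = 0$ when $k \ge 1$ and the shifted equation collapses to the deterministic flow $\partial_t Y = (\Delta - 1) Y - \sum_{k=0}^N a_k Y^k$, already displays the absorbing mechanism.

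For step (b), the coming-down-from-infinity estimate provides a compact set $K \subset \contisp^{-\alpha_0}$ and a short time $t_1$ with $\inf_x P_{t_1}(x, K)$ bounded below (the solution lies in a fixed ball of $\contisp^{\beta}$ with probability close to one, uniformly in $x$). On $K$ the map $x \mapsto P_{t_2}(x, U)$ is continuous by the strong Feller property and strictly positive by irreducibility, so compactness yields $\inf_{x \in K} P_{t_2}(x, U) > 0$. Chaining the return to $K$, the steering into $U$, and a final strong Feller smoothing step over a total time $t_0 = 3$ produces a uniform minorization
\begin{equation*}
  \inf_{x \in \contisp^{-\alpha_0}} P_{t_0}(x, \cdot) \ge \lambda\, \eta(\cdot)
\end{equation*}
for some probability measure $\eta$ and some $\lambda \in (0,1)$, with the same argument available at every $t \ge 3$.

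Finally, step (c) is the classical Doeblin theorem: a uniform minorization, iterated once the process has entered the small set, gives existence and uniqueness of the invariant measure $\mu$ together with a geometric total-variation contraction; absorbing constants and letting the threshold $t \ge 3$ account for the time needed to come down from infinity and enter the minorization regime yields the stated bound $\tvnorm{P_t^* \nu - \mu} \le (1-\lambda)^t \tvnorm{\nu - \mu}$. I expect the main obstacle to be step (b): upgrading the pointwise positivity coming from the support theorem to a genuinely uniform lower bound over all initial conditions, which forces one to interlock the a priori estimate, the strong Feller property and the support description while keeping the time budget exactly $3$. Checking that the $\Phi^4_2$ a priori bounds of \cite{TW18} extend to the coercive $P(\Phi)_2$ nonlinearity is a secondary, more routine, point.
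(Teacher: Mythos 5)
Your proposal is correct and follows essentially the same route as the paper: the paper's proof is a one-line reduction to the ergodicity argument of \cite[Section 6]{TW18}, whose structure (strong Feller property from \cite[Section 5]{TW18}, a uniform a priori bound giving return to a compact set, irreducibility supplied by Theorem \ref{thm:main}, and a Doeblin-type minorization yielding the total-variation contraction) is exactly what you reconstruct. The only ingredient genuinely new to this paper is the support theorem for $(Z^{:k:})_{k}$ for general $N$, and you correctly identify it as the step that unlocks irreducibility.
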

\begin{proof}
  Once we obtain Theorem \ref{thm:main}, this corollary can be proved by a similar argument
  in \cite[Section 6]{TW18}.
\end{proof}
\subsection{Technical lemmas}\label{subsec:technical}
We recall $H_k(x) = H_k(x, 1)$.
\begin{lemma}\label{lem:sufficient_condition_for_integral_of_H_m}
  Let $N \in \N$.
  Assume that $f \in C^{\infty}(\torus)$ satisfies
  \begin{equation*}
    \int_{\torus} f(x)^{2k} dx = (2k-1)!! \quad \mbox{for } k = 1, \ldots, N.
  \end{equation*}
  Then, we have
  \begin{equation*}
    \int_{\torus} H_{2k}(f(x)) dx = 0 \quad \mbox{for } k=1, \ldots, N.
  \end{equation*}
\end{lemma}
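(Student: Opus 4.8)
The plan is to recognize that the numbers $(2k-1)!!$ are exactly the even moments of a standard Gaussian, so that the hypothesis asserts that the even moments of $f$ over $\torus$ coincide with the Gaussian ones up to order $2N$; the conclusion is then the orthogonality of Hermite polynomials to constants in disguise. First I would invoke the explicit representation in Remark \ref{remark:explicit_form_of_hermite}: since $2k$ is even, every exponent $2k - 2l$ appearing in $H_{2k}(x) = H_{2k}(x,1)$ is even, so $H_{2k}$ is an even polynomial, say $H_{2k}(x) = \sum_{j=0}^{k} a_j x^{2j}$. Integrating termwise over $\torus$ gives
\begin{equation*}
  \int_{\torus} H_{2k}(f(x))\, dx = \sum_{j=0}^{k} a_j \int_{\torus} f(x)^{2j}\, dx.
\end{equation*}
For $1 \le j \le k$ the hypothesis applies (because $j \le k \le N$) and replaces $\int_{\torus} f^{2j}$ by $(2j-1)!!$, while for $j = 0$ the integral is $\int_{\torus} 1\, dx = 1$, consistent with the convention $(-1)!! = 1$.

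The decisive step is to read the resulting finite sum as a Gaussian expectation. Writing $G$ for a standard real Gaussian (concretely $\xi(u)$ with $\norm{u}_{L^2} = 1$), one has $\expect[G^{2j}] = (2j-1)!!$ for every $j \ge 0$, so
\begin{equation*}
  \sum_{j=0}^{k} a_j (2j-1)!! = \sum_{j=0}^{k} a_j \expect[G^{2j}] = \expect[H_{2k}(G)].
\end{equation*}
Since $H_{2k}(G)$ belongs to the $2k$-th Wiener chaos $H^{:2k:}$, which is orthogonal to the constants $H^{:0:}$, this expectation vanishes whenever $2k \ge 1$. Hence $\int_{\torus} H_{2k}(f) = 0$ for each $k = 1, \ldots, N$, as claimed.

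There is essentially no computational obstacle here; the whole point is the conceptual identification of the seemingly ad hoc constraint $\int_{\torus} f^{2k} = (2k-1)!!$ with the matching of even Gaussian moments, after which the desired vanishing is nothing but $\expect[H_{2k}(G)] = 0$. If one prefers to avoid introducing $G$, the same conclusion follows by substituting $f$ into the generating identity \eqref{eq:hermite_identity}, integrating over $\torus$, and isolating the coefficient of $t^{2k}$: that coefficient, equal to $(2k)!^{-1} \int_{\torus} H_{2k}(f)$, depends only on the even moments $\int_{\torus} f^{2j}$ with $0 \le j \le k$, and substituting their prescribed values reproduces the corresponding coefficient for the Gaussian Laplace transform $\expect[e^{tG - t^2/2}] = 1$, namely zero.
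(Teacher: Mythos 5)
Your proof is correct. The paper's argument begins identically --- substitute the explicit representation of $H_{2k}$ from Remark \ref{remark:explicit_form_of_hermite}, integrate termwise over $\torus$, and insert the hypothesized moments --- but it finishes by pure algebra: writing $(2m-1)!! = (2m)!/(2^m m!)$, the resulting sum collapses to $\frac{(2k)!}{2^k}\sum_{l=0}^{k}\frac{(-1)^l}{l!\,(k-l)!}$, which vanishes by the binomial theorem. You instead evaluate the same sum by recognizing it as $\expect[H_{2k}(G)]$ for a standard Gaussian $G$ and invoking the orthogonality $H^{:2k:} \perp H^{:0:}$ (equivalently, that all coefficients of $t^{2k}$, $k \geq 1$, in $\expect[e^{tG - t^2/2}] = 1$ vanish). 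The two routes are mathematically equivalent --- the paper's binomial computation is exactly a bare-hands verification of $\expect[H_{2k}(G)] = 0$ --- so what your version buys is conceptual clarity: it explains why the hypothesis takes the form it does (the numbers $(2k-1)!!$ are the even Gaussian moments) and why the vanishing should be expected, at the cost of importing two standard Gaussian facts, whereas the paper's version is self-contained and needs no probabilistic input for what is a purely deterministic statement about smooth functions. Your generating-function variant at the end is also sound, and for the reason you state: since $H_{2k}$ is even, only the even moments of $f$ enter the coefficient of $t^{2k}$, all of which are prescribed for $1 \leq j \leq k \leq N$, together with $\int_{\torus} 1\, dx = 1$ (unit volume of the torus, which the paper's proof also uses silently in the $l = k$ term).
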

\begin{proof}
  According to Remark \ref{remark:explicit_form_of_hermite}, we have
  \begin{equation*}
    H_{2k}(f(x)) = (2k)! \sum_{l=0}^k \frac{(-1)^l}{2^ll! (2k-2l)!} f(x)^{2(k-l)}
  \end{equation*}
  If the assumption is satisfied, then
  \begin{align*}
    \int_{\torus} H_{2k}(f(x)) dx &= (2k)! \sum_{l=0}^k \frac{(-1)^l}{2^l l!(2k-2l)!}
    \int_{\torus} f(x)^{2(k-l)} dx \\
    &= (2k)! \sum_{l=0}^k \frac{(-1)^l}{2^l l! (2k-2l)!} (2k-2l-1)!! \\
    &= \frac{(2k)!}{2^k} \sum_{l=0}^k \frac{(-1)^l}{l! (k-l)!} = 0. \qedhere
  \end{align*}
\end{proof}

\begin{lemma}\label{lem:existence_of_nice_f}
  For every $N \in \N$, there exists $f \in C^{\infty}(\torus)$ such that
  \begin{equation*}
    \int_{\torus} H_{k}(f(x)) dx = 0 \quad \mbox{for } k=1, 2, \ldots, 2N.
  \end{equation*}
\end{lemma}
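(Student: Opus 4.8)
The plan is to treat the odd-index and even-index Hermite integrals by completely different mechanisms, and then to reduce the even-index requirement to a one-dimensional moment-matching problem. From the explicit representation in Remark \ref{remark:explicit_form_of_hermite} one reads off the parity $H_k(-x) = (-1)^k H_k(x)$. Hence if $f$ is \emph{odd} in the sense that $f(-x) = -f(x)$ (where $-x$ is the group inverse on $\torus$), then for every odd index $k$ the involution $x \mapsto -x$, which preserves Lebesgue measure, gives $\int_{\torus} H_k(f(x))\,dx = \int_{\torus} H_k(f(-x))\,dx = -\int_{\torus} H_k(f(x))\,dx = 0$. This disposes of $k = 1, 3, \dots, 2N-1$ for free. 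For the even indices $k = 2j$ with $j = 1, \dots, N$, Lemma \ref{lem:sufficient_condition_for_integral_of_H_m} reduces everything to the $N$ \emph{moment conditions} $\int_{\torus} f(x)^{2j}\,dx = (2j-1)!!$. So it suffices to produce a single odd function $f \in C^{\infty}(\torus)$ realizing these even moments.

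Next I would collapse the problem to one dimension by seeking $f$ of the form $f(x_1, x_2) = \phi(x_1)$ with $\phi \in C^{\infty}(\R/\Z)$ odd; such an $f$ is automatically odd on $\torus$, and $\int_{\torus} f^{2j} = \int_0^1 \phi^{2j}$. An odd smooth $\phi$ necessarily vanishes at $0$ and $\tfrac12$. Writing $\mu_{\phi}$ for the law of $\phi$ under Lebesgue measure on $\R/\Z$ (its occupation measure), the requirement becomes that $\mu_{\phi}$ be a symmetric probability measure on $\R$ whose even moments up to order $2N$ agree with those of the standard Gaussian, namely $\int y^{2j}\,d\mu_{\phi}(y) = (2j-1)!!$ for $j = 1, \dots, N$.

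The construction then splits into two sub-tasks. First, \emph{existence of a target measure}: find a compactly supported symmetric probability measure $\nu$, ideally with smooth density, whose first $N$ even moments equal $(2j-1)!!$. The Gaussian realizes these moments exactly but is excluded, since a smooth function on the compact torus is bounded and so $\mu_{\phi}$ must have compact support; however, the target moment vector lies in the interior of the cone of moment vectors of measures supported on a large interval $[-M, M]$, so such a $\nu$ exists, and interiority lets one correct the small moment error produced by truncating the Gaussian. Second, \emph{realization}: build $\phi$ whose occupation measure is exactly $\nu$, by letting $\phi$ sweep monotonically from $0$ up to a single maximum and back down on $[0, \tfrac12]$ and extending it oddly, choosing the profile so that the occupation density $1/\abs{\phi'}$ matches the density of $\nu$.

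The main obstacle is \emph{exactness}: Lemma \ref{lem:sufficient_condition_for_integral_of_H_m} requires the moment equalities to hold on the nose, which rules out the naive route of mollifying a piecewise-constant construction, since smoothing perturbs the moments. I expect to resolve this by an openness argument, namely by restricting to a finite-parameter family of odd profiles and checking that the even-moment map $\phi \mapsto (\int_0^1 \phi^2, \dots, \int_0^1 \phi^{2N})$ is a submersion at a suitable point, so that its image contains a neighbourhood of the attained moments and hence the exact target $\big(1, 3, \dots, (2N-1)!!\big)$. A secondary technical point, to be handled alongside, is smoothness of $\phi$ at its turning point: the occupation density carries a square-root singularity at the maximum, which must either be built into the chosen density of $\nu$ or absorbed into the shape of the profile near the turning point.
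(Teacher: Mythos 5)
Your proposal shares the paper's skeleton --- reduce the Hermite conditions to moment conditions (odd moments zero, even moments Gaussian) via Lemma \ref{lem:sufficient_condition_for_integral_of_H_m} and parity, produce a function nearly realizing those moments, and restore exactness by a finite-dimensional openness argument keyed to linear independence of powers of the base function --- but your realization mechanism is genuinely different. The paper works in two dimensions with the Box--Muller-type function $\sqrt{2\log(x_1^{-1})}\cos(\pi x_2)$, which has \emph{exactly} Gaussian moments but is singular on $\torus$, then damps it to make it smooth and runs the implicit function theorem in all $2N+1$ parameters; you kill the odd moments structurally by oddness, collapse to one dimension, and reduce the even moments to a truncated moment problem plus an occupation-measure ODE. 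Several of your steps are sound: the parity argument for odd $k$; the reduction of even $k$ to $\int_{\torus} f^{2j} = (2j-1)!!$; and the existence of a compactly supported symmetric measure with the Gaussian even moments up to order $2N$ (a separating-hyperplane argument works: an even polynomial $p$ with no constant term satisfying $p \leq \int p \, d\gamma$ on all of $\R$ must vanish identically since the Gaussian $\gamma$ has full support, so the Gaussian moment vector lies in the interior of the moment body of measures on $[-M,M]$ for large $M$).

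The genuine gap is in your exactness step, which is the crux you yourself identify. You assert that a submersion at a ``suitable point'' gives an image containing ``a neighbourhood of the attained moments and hence the exact target.'' That inference fails unless the attained moments are within the (unquantified) radius of that neighbourhood: for a fixed smooth base profile with moment error $\epsilon$, nothing relates $\epsilon$ to the radius $r$ supplied by the submersion, and if you instead take a sequence of base points whose moments converge to the target, you need the radii to stay bounded below, which you have not arranged. The paper closes precisely this gap by applying the implicit function theorem \emph{at} a parameter value where the moments are exact: the singular Box--Muller function sits at $a_0 = 0$ inside a family that is smooth for $a_0 \neq 0$, and the moment map is verified (by explicit domination) to be $C^1$ up to and including $a_0 = 0$, which is what the particular damping factor is engineered for. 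To repair your version you must either do the analogous thing --- take the non-smooth exact realization as base point, use a mollification/damping scale as the extra parameter, and prove $C^1$ dependence of the moments at scale zero, which is not automatic --- or, more elegantly, make your realization exact so that the openness step becomes unnecessary: choose the target measure with arcsine-type density $w(y)(M^2-y^2)^{-1/2}$, $w$ smooth, even, positive; then the substitution $\phi = M\sin\theta$ turns the occupation ODE $\phi' = 2/\rho(\phi)$ into the non-degenerate ODE $\theta' = 2/w(M\sin\theta)$, whose solution is smooth across the turning point, and the moment problem can be solved inside the arcsine class by the same convexity and interiority argument. As written, however, the proposal's resolution of its main obstacle rests on an invalid inference.
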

\begin{proof}
  We set
  \begin{equation*}
    b_k \defby
    \begin{cases}
      0 &\mbox{for odd }k \\
      (\frac{k}{2}-1)!! &\mbox{for even }k.
    \end{cases}
  \end{equation*}
  According to Lemma \ref{lem:sufficient_condition_for_integral_of_H_m}, it suffices to find a smooth $f$ such that
  \begin{equation*}
    \int_{\torus} f(x)^{k} dx = b_k \quad \mbox{for } k = 1, \ldots, 2N.
  \end{equation*}
  We observe that
  \begin{equation*}
    \int_0^1 (2 \log(x_1^{-1}))^k dx_1 = 2^k \int_0^{\infty} y^k e^{-y} dy = (2k)!!
  \end{equation*}
  and that
  \begin{equation*}
    \int_0^1 \cos^{2k}\left(\pi x_2\right) dx_2 = \frac{(2k-1)!!}{(2k)!!}.
  \end{equation*}
  Therefore, we have
  \begin{equation*}
    \int_{\torus} \left\{ \sqrt{2\log(x_1^{-1})} \cos\left(\pi x_2\right) \right\}^{k} dx_1 dx_2 = b_k.
  \end{equation*}
  However, the function
  \begin{equation*}
    \torus \ni (x_1, x_2) \mapsto \sqrt{2\log(x_1^{-1})} \cos\left(\pi x_2\right)
  \end{equation*}
  is not smooth on the torus $\torus$.

  We set
  \begin{multline*}
    g(a, x)  \defby \sqrt{2\log(x_1^{-1})} \cos\left(\pi x_2\right) \\
    \times \exp\left(-a^2(x_1^{-\frac{1}{2}} + (1-x_1)^{-\frac{1}{2}} + x_2^{-\frac{1}{2}} + (1-x_2)^{-\frac{1}{2}})\right).
  \end{multline*}
  The function $g(a, \cdot)$ is smooth on $\torus$ provided $a \neq 0$.
  Furthermore, the map
  \begin{equation*}
    (-1, 1) \ni a \mapsto \int_{\torus} g(a, x)^{k} dx \in \R
  \end{equation*}
  is continuously differentiable.
  Indeed, we have
  \begin{equation*}
    \abs{\partial_a g(a, x)^{k}} \leq 2k g(0, x)^{k} (x_1^{-\frac{1}{2}} + (1-x_1)^{-\frac{1}{2}}
    +x_2^{-\frac{1}{2}} + (1-x_2)^{-\frac{1}{2}})
  \end{equation*}
  for $a \in (-1, 1)$ and the left hand side is integrable.

  We take $\phi_1, \ldots, \phi_{2N} \in C^{\infty}(\torus)$, which will be determined later, and we set
  \begin{equation*}
    h(a_0, a_1, \ldots, a_{2N}; x) \defby g(a_0, x) + a_1 \phi_1(x) + \cdots + a_{2N} \phi_{2N}(x)
  \end{equation*}
  and for $a_0, a_1, \ldots, a_{2N} \in (-1, 1)$
  \begin{equation*}
    H(a_0, a_1, \ldots, a_{2N}) \defby \left(\int_{\torus} h(a_0, a_1, \ldots, a_{2N}; x)^{k} dx \right)_{k=1}^{2N}.
  \end{equation*}
  Then $H$ is continuously differentiable and $H(0, 0, \ldots, 0) = (b_k)_{k=1}^{2N}$.

  We observe
  \begin{equation}\label{eq:det_of_partial_H}
    \det (\partial_{a_i} H)_{i=1}^{2N} (0, 0, \ldots, 0)
    = \det \left( j \int_0^1 \phi_i(x) g(0, x)^{j-1} dx \right)_{i,j = 1}^{2N}.
  \end{equation}
  If \eqref{eq:det_of_partial_H} is nonzero, the implicit function theorem implies that for sufficiently small
  $a_0 \in (0, 1)$, there exist $a_1, \ldots, a_{2N} \in (-1, 1)$ such that
  $H(a_0, a_1, \ldots, a_{2N}) = (b_k)_{k=1}^{2N}$. Then, we can take $f = h(a_0, a_1, \ldots, a_{2N})$.

  Therefore, it remains to show that for suitably chosen $\phi_1, \ldots, \phi_{2N}$, \eqref{eq:det_of_partial_H} is nonzero.
  We note that
  $g(0,x), \ldots, g(0, x)^{2N-1}$
  are linearly independent in $L^2(\torus)$. Therefore, for each $\epsilon \in (0, 1)$,
  we can find $\phi_1, \ldots, \phi_{2N} \in C^{\infty}(\torus)$ such that
  \begin{equation*}
    \abs*{j \int_0^1 \phi_i(x) g(0, x)^{j-1} dx - \indic_{\{i=j\}}} < \epsilon
  \end{equation*}
  for $i, j \in \{1, \ldots, 2N\}$.
  Then we have
  \begin{equation*}
    \det (\partial_{a_i} H)_{i=1}^{2N} (0, 0, \ldots, 0) = 1 + O(\epsilon)
  \end{equation*}
  and for sufficiently small $\epsilon$, \eqref{eq:det_of_partial_H} is nonzero.
\end{proof}
\begin{lemma}\label{lem:properties_of_h_n}
  Let $N \in \N$. Suppose that we are given a sequence $\{C_n\}_{n=1}^{\infty}$ of nonnegative numbers with
  $C_n = O(\log n)$.
  Then there exists a sequence $\{h_n\}_{n=1}^{\infty} \subset \H$ with the following properties.
  \begin{enumerate}[(i)]
    \item The function $h_n$ is of the form
    \begin{equation*}
      h_n(t, x) = \sqrt{C_n} f_{n, t}(l_n x),
    \end{equation*}
    where $l_n \sim (\log n)^{\log(\log n)}$ and $\sup_{t\in \R_+, n \in \N} \norm{f_{n, t}}_{\contisp^1} < \infty$.
    \item We have
    $\lim_{n \to \infty} \sup_{t \in \R_+} \norm{H_{k}(h_n(t), C_n)}_{\contisp^{-\alpha}} = 0$ for $k = 1, \ldots, N$.
  \end{enumerate}
\end{lemma}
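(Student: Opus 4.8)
The plan is to realize $h_n$ as a spatial rescaling, by an integer factor $l_n$, of the fixed profile produced in Lemma \ref{lem:existence_of_nice_f}, and then to exploit the sub-unit dilation estimate of Proposition \ref{prop:scaling_in_besov}(ii) to annihilate the Hermite powers in $\contisp^{-\alpha}$. Concretely, fix $f\in C^\infty(\torus)$ with $\int_\torus H_k(f(x))\,dx=0$ for $k=1,\dots,2N$; the case $k=1$ already gives $\int_\torus f=0$. From the explicit formula in Remark \ref{remark:explicit_form_of_hermite} one reads off the homogeneity $H_k(\sqrt{C}\,u,C)=C^{k/2}H_k(u)$, so that for a profile $h_n(t,x)=\sqrt{C_n}\,f_{n,t}(l_n x)$ one has the identity
\begin{equation*}
  H_k(h_n(t),C_n)=C_n^{k/2}\,\Lambda_{l_n}\big[H_k(f_{n,t})\big],
\end{equation*}
where $\Lambda_{l_n}g=g(l_n\cdot)$. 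Choosing $l_n\in\N$ with $l_n\sim(\log n)^{\log\log n}$, the distribution $H_k(f_{n,t})$ has vanishing zeroth Fourier mode precisely when $\int_\torus H_k(f_{n,t})=0$, in which case Proposition \ref{prop:scaling_in_besov}(ii) applied with the negative exponent $-\alpha$ gives
\begin{equation*}
  \norm{H_k(h_n(t),C_n)}_{\contisp^{-\alpha}}
  \lesssim C_n^{k/2}\,l_n^{-\alpha}\,\norm{H_k(f_{n,t})}_{\contisp^{-\alpha}}.
\end{equation*}

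Granting that $\sup_{n,t}\norm{f_{n,t}}_{\contisp^1}<\infty$, the last factor is bounded uniformly, and since $C_n=O(\log n)$ while $l_n^{-\alpha}$ is of order $\exp(-\alpha(\log\log n)^2)$, the bound is of order $\exp\big(\tfrac{k}{2}\log\log n-\alpha(\log\log n)^2\big)\to 0$, which would establish (ii). It therefore remains to construct $g_n\in L^2(\R\times\torus)$ so that $h_n(t)\defby\int_{-\infty}^t e^{(t-s)(\Delta-1)}g_n(s)\,ds$ has exactly the factorized form in (i), with $f_{n,t}$ of zero-mean Hermite type and uniformly bounded in $\contisp^1$. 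A natural choice is $g_n(s,x)=\phi_n(s)\,(1-\Delta)\psi_n(x)$ with $\psi_n\defby\sqrt{C_n}\,\Lambda_{l_n}f$ and a temporal weight $\phi_n\in L^2(\R)$: since $\psi_n$ and $(1-\Delta)\psi_n$ are supported on the Fourier modes $l_n\Z^2$, which the heat semigroup preserves, $h_n(t)$ remains supported there and is hence a genuine rescaling $\sqrt{C_n}\,f_{n,t}(l_n\cdot)$ of a torus function; contractivity of $e^{t(\Delta-1)}$ on $\contisp^1$ then furnishes the required uniform bound on $\norm{f_{n,t}}_{\contisp^1}$.

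The main obstacle is the zero-mean Hermite requirement $\int_\torus H_k(f_{n,t})=0$ together with its uniformity in $t$. Membership in $\H$ forces $h_n$ to be driven by a finite $L^2(\R\times\torus)$ datum, so the profile must be switched on and off in time; on the window where $\phi_n\approx 1$ one has $f_{n,t}\approx f$ and the integrals $\int_\torus H_k(f_{n,t})$ are (nearly) the vanishing moments of $f$, but on the transient regions this structure is destroyed and the naive bound degenerates into the constant $H_k(0,C_n)$. The crux is thus to choose $\phi_n$, and if necessary a $t$-dependent modification of the argument that leaves every integral $\int_\torus H_k(\cdot)$ invariant — for instance a rigid translation $f_{n,t}=f(\cdot+\theta_n(t))$, under which all these moments are preserved — so that $\int_\torus H_k(f_{n,t})=0$ holds throughout the relevant time range while $\norm{f_{n,t}}_{\contisp^1}$ stays bounded. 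The displayed scaling estimate then closes the argument. I expect this reconciliation of $\H$-membership with the exact preservation of the Hermite moments of the spatial profile, uniformly in $t$, to be the step demanding the most care, since it is exactly where the finiteness of the $L^2$ forcing competes with the rigidity of the zero-mean conditions.
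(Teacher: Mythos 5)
Your strategy coincides with the paper's---rescale the fixed profile $f$ of Lemma \ref{lem:existence_of_nice_f} by $l_n\sim(\log n)^{\log\log n}$, use the homogeneity $H_k(\sqrt{C}u,C)=C^{k/2}H_k(u)$ together with Proposition \ref{prop:scaling_in_besov}(ii), and beat $C_n^{k/2}\lesssim\exp(\tfrac{k}{2}\log\log n)$ by $l_n^{-\alpha}=\exp(-\alpha(\log\log n)^2)$---and even your forcing $g_n(s,x)=\phi_n(s)(1-\Delta)\psi_n(x)$ with $\psi_n=\sqrt{C_n}\Lambda_{l_n}f$ is exactly the paper's construction: taking $\phi_n=\indic_{[-1,\infty)}$ gives $f_{n,t}=\sum_m \F f(m)\,(1-e^{-\lambda_{n,m}(t+1)})e_m$ with $\lambda_{n,m}=1+4\pi^2l_n^2\abs{m}^2$. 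But the step you leave open is precisely the step that needs an idea, and the fix you sketch for it would fail. You insist on the exact moment conditions $\int_{\torus}H_k(f_{n,t}(x))\,dx=0$ for every $t$; this is indeed unattainable (during a switch-on the profile must interpolate between $0$ and a multiple of $f$, and no rigid translation $f(\cdot+\theta_n(t))$ is ever the zero function, so translations cannot bridge the transient), but it is also unnecessary. The paper's resolution is perturbative: impose exact vanishing only on the time-independent profile $f$. Since $\F f(0)=\int_{\torus}f=0$, every active Fourier mode of $h_n$ relaxes at rate $\lambda_{n,m}\geq 1+4\pi^2l_n^2$, so switching the forcing on at $s=-1$, strictly before the time window $\R_+$, yields
\begin{equation*}
  \sup_{t\in\R_+}\norm{h_n(t)-\sqrt{C_n}\,f(l_n\cdot)}_{\contisp^1}
  \lesssim \sqrt{C_n}\,e^{-c\,l_n^2}\lesssim n^{-1},
\end{equation*}
because $l_n^2=(\log n)^{2\log\log n}\gg\log n$. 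Writing $y_n(t)\defby h_n(t)-\sqrt{C_n}f(l_n\cdot)$ and expanding via Proposition \ref{prop:hermite_binomial_expansion},
\begin{equation*}
  H_k(h_n(t),C_n)=C_n^{k/2}\Lambda_{l_n}\bigl[H_k(f)\bigr]
  +\sum_{l=0}^{k-1}\binom{k}{l}C_n^{l/2}\Lambda_{l_n}\bigl[H_l(f)\bigr]\,y_n(t)^{k-l},
\end{equation*}
your scaling estimate applies verbatim to the first term (the mean of $H_k(f)$ vanishes exactly), while the remaining terms are $O\bigl(n^{-(k-l)}(\log n)^{l/2}l_n\bigr)\to 0$ in $\contisp^1\hookrightarrow\contisp^{-\alpha}$. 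This closes (ii) with no moment condition on $f_{n,t}$ at individual times.

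To be fair to your objection: you are right that an honest $L^2(\R\times\torus)$ forcing makes $h_n(t)\to 0$ as $t\to\infty$, hence $\sup_{t\in\R_+}\norm{H_k(h_n(t),C_n)}_{\contisp^{-\alpha}}\gtrsim C_n^{k/2}$ for even $k$, so the lemma cannot hold literally as stated once $C_n\to\infty$; the paper's own $h_n$ (forcing constant in time on $[-1,\infty)$) does not honestly lie in $\H$ either. This is a repairable blemish rather than the mathematical core: the application in Lemma \ref{lem:key_lemma} only needs convergence in $\zset$, i.e.\ uniformly on compact time intervals, so one may truncate the forcing at times $T_n\to\infty$ and run the perturbative argument above on each compact interval. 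Having spotted this tension, the productive move is to relax the time-uniformity and proceed as above; leaving the construction of $f_{n,t}$ unresolved, as your proposal does, leaves the lemma unproved.
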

\begin{remark}
  We note that $\{l_n\}$ satisfies
  \begin{equation*}
    \lim_{n \to \infty} \frac{l_n}{(\log n)^{\beta}} = \infty, \quad
    \lim_{n \to \infty} \frac{l_n}{n^{\beta}} = 0
  \end{equation*}
  for every $\beta \in (0, \infty)$.
  This is the only property of $\{l_n\}$ which will be used.
\end{remark}
\begin{proof}
  Let $f$ be the function constructed in Lemma \ref{lem:existence_of_nice_f}.
  We set
  \begin{equation*}
    a_m \defby \int_{\torus} f(x) e^{-2\pi i m \cdot x} dx \quad ( m \in \Z^2 ),
  \end{equation*}
  $l_n \defby \lfloor (\log n)^{\log(\log n)} \rfloor$
  and $\lambda_{n, m} \defby 1 + 4\pi^2 l_n^2 \abs{m}^2$.
  If we set
  \begin{align*}
    h_n(t) &\defby \sqrt{C_n} \int_{-1}^t e^{(t-s)(\Delta - 1)} \left(
    \sum_{m \in \Z^2} a_m \lambda_{n, m} e_{l_n m} \right) ds \qquad \mbox{and}\\
    f_{n, t} &\defby \sum_{m \in \Z^2} a_m(1 - e^{-\lambda_{n, m}(t+1)}) e_m,
  \end{align*}
  the condition (i) is satisfied.

  We next check that the condition (ii) is satisfied.
  As we have
  \begin{equation*}
    \sup_{t \in \R_+}\norm{h_n(t) - \sqrt{C_n} f(l_n \cdot)}_{\contisp^1} \lesssim n^{-1},
  \end{equation*}
  it suffices to show
  \begin{equation}\label{eq:H_k_f_n_tends_to_zero}
    \lim_{n \to \infty} C_n^{\frac{k}{2}} \norm{H_k(f(l_n \cdot))}_{\contisp^{-\alpha}} = 0
  \end{equation}
  for $k = 1, \ldots, N$.
  Since $\int_{\torus} H_k(f(x)) dx = 0$, Lemma \ref{prop:scaling_in_besov}
  implies $\norm{H_k(f(l_n \cdot))}_{\contisp^{-\alpha}}$ $\lesssim l_n^{-\alpha}$.
  By comparing the growth rate of $C_n$ and $l_n$, this leads to \eqref{eq:H_k_f_n_tends_to_zero}.
\end{proof}
For $h \in \H$, we define $T_h: C(\R_+; \contisp^{-\alpha})^{\N} \to C(\R_+; \contisp^{-\alpha})^{\N}$ by
\begin{equation*}
  (T_h \underline{z})_k \defby \sum_{l=0}^k \binom{k}{l} z_l h^{k-l} \quad \mbox{for } \, \underline{z} = (z_k)_{k=0}^{\infty}.
\end{equation*}
As $\H \subset C(\R_+ ; \contisp^1)$, Proposition \ref{prop:paraproduct_estimates} implies that $T_h$ is homeomorphic.
\begin{lemma}\label{lem:cameron_martin}
  For every $h \in \H$, we have
  \begin{equation*}
    \Supp\{\Law( \underline{Z} )\}
    = \Supp\{\Law( T_h \underline{Z})\}.
  \end{equation*}
\end{lemma}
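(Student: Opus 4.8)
The plan is to exploit the Cameron--Martin invariance of the white noise. Every $h \in \H$ is driven by some $g \in L^2(\R \times \torus)$ through $h(t) = \int_{-\infty}^t e^{(t-s)(\Delta - 1)} g(s, \cdot)\, ds$, and the Cameron--Martin theorem for the Gaussian measure $\mu \defby \Law(\xi)$ (see \cite[Chapter 14]{Jan97}) asserts that the translate $\mu_g$, i.e.\ the law of $\xi + g$, is equivalent to $\mu$, with density proportional to $\exp(\xi(g) - 2^{-1}\norm{g}_{L^2}^2)$. Two observations make this decisive. First, pushforwards of mutually absolutely continuous measures are again mutually absolutely continuous, and equivalent measures share the same support, since for an open set $U$ one has $\mu_1(U) = 0 \iff \mu_2(U) = 0$. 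Second, $T_h \underline{Z}$ is nothing but $\underline{Z}$ reconstructed from the shifted noise $\xi + g$. Granting these, the lemma is immediate: writing $\underline{Z} = F(\xi)$ for the measurable map $F$ obtained as the $L^2$-limit of the approximations, we would get $\Law(T_h \underline{Z}) = F_*\mu_g$ and $\Law(\underline{Z}) = F_*\mu$, which are equivalent and hence have identical supports.

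The heart of the argument is therefore the identity $T_h \underline{Z} = F(\xi + g)$ almost surely, where $F_n(\xi) \defby (Z_n^{:k:})_{k=0}^\infty = (H_k(Z_n, \rconst_n))_{k=0}^\infty$ and $F = \lim_n F_n$. At the level of the approximations this is transparent. By linearity of the truncated stochastic convolution and the fact that $\kernel$ is the kernel of $e^{t(\Delta-1)}$, one has $Z_n(\xi + g) = Z_n + h_n$ with $h_n \defby \Pi_n h$, so Proposition \ref{prop:hermite_binomial_expansion} gives
\begin{equation*}
  F_n(\xi + g)_k = H_k(Z_n + h_n, \rconst_n) = \sum_{l=0}^k \binom{k}{l} H_l(Z_n, \rconst_n) h_n^{k-l}
  = \sum_{l=0}^k \binom{k}{l} Z_n^{:l:} h_n^{k-l}.
\end{equation*}
Letting $n \to \infty$, the right-hand side converges to $(T_h \underline{Z})_k$: indeed $Z_n^{:l:} \to Z^{:l:}$ in $C(\R_+; \contisp^{-\alpha})$ by Corollary \ref{cor:fin_dim_approxim_of_she}, while $h_n \to h$ in $C(\R_+; \contisp^1)$, and the products converge in $\contisp^{-\alpha}$ thanks to the paraproduct and resonance bounds of Proposition \ref{prop:paraproduct_estimates} (applicable since $1 - \alpha > 0$). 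The left-hand side, meanwhile, converges almost surely to $F(\xi + g)$: the $L^2$-convergence $F_n \to F$ yields $\mu$-almost sure convergence along a subsequence, which by the equivalence $\mu_g \sim \mu$ is also $\mu_g$-almost sure, i.e.\ $\P$-almost sure convergence of $F_{n_j}(\xi + g)$. Matching the two limits along this subsequence gives $F(\xi + g) = T_h \underline{Z}$ almost surely.

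I expect the main obstacle to be the careful bookkeeping in this limiting identity rather than any deep estimate. One must fix a single measurable version of $F$, defined on a set of full $\mu$-measure (hence full $\mu_g$-measure), and verify that the subsequential almost-sure convergence transfers cleanly between the two equivalent measures so that $F(\xi + g)$ is legitimately identified with the $\P$-almost sure limit of the approximations evaluated at the shifted noise. The analytic inputs, namely the convergences $h_n \to h$ and $Z_n^{:l:} \to Z^{:l:}$ together with the continuity of the paraproduct and resonance products, are all supplied by the preliminaries; once the identity $T_h \underline{Z} = F(\xi + g)$ is established, the conclusion rests only on the soft measure-theoretic fact that equivalent laws have identical supports.
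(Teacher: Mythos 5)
Your skeleton is the same as the paper's: realize the shift $h$ by a Cameron--Martin translation $\xi \mapsto \xi + g$ of the white noise, identify $T_h\underline{Z}$ with the noise-to-solution map evaluated at the shifted noise via an approximation argument, and transfer positivity of probabilities through the equivalence of the two laws. Your measure-theoretic bookkeeping (fixing one version of $F$, passing to almost surely convergent subsequences, moving null sets across equivalent measures) is sound, and phrasing the conclusion with pushforwards $F_*\mu$, $F_*\mu_g$ rather than with the paper's two measures $\P$, $\P_g$ on one space is only a cosmetic difference.

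The gap lies in the approximation you chose. You take sharp spectral truncations $h_n = \Pi_n h$, $Z_n = \Pi_n Z$, and assert without proof that $h_n \to h$ in $C(\R_+;\contisp^1)$. On the two-dimensional torus this is not a harmless statement: $\Pi_n$ is the Fourier multiplier of the ball $\set{\abs{m}\le n}$, whose kernel has $L^1(\torus)$-norm of order $n^{1/2}$ (the Lebesgue constant of spherical partial sums), so $\Pi_n$ is not uniformly bounded on $L^{\infty}(\torus)$, nor on $\contisp^1 = \besov^1_{\infty,\infty}$, and the convergence $\Pi_n f \to f$ in $\contisp^1$ can fail for general $f \in \contisp^1$. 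What your product argument actually needs is weaker --- convergence and uniform boundedness of $h_n$ in some $\contisp^{\beta}$ with $\beta > \alpha$, so that Proposition \ref{prop:paraproduct_estimates} applies to $Z_n^{:l:} h_n^{k-l}$ and to the limit --- and this weaker statement is true for $h \in \H$, but it is a lemma, not an observation: one must use that $h$ comes from an $L^2$ driver $g$, so that the spatial Fourier coefficients of $h(t)$ are bounded by $(1+\abs{m})^{-1}$ times constants that are square-summable uniformly in $t$, and then control $(1-\Pi_n)\Delta_k h$ near the cutoff frequency via Bernstein's inequality; nothing you cite (Corollary \ref{cor:fin_dim_approxim_of_she}, Proposition \ref{prop:paraproduct_estimates}) supplies this. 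This is exactly the difficulty the paper's proof is engineered to avoid: there the approximation is the smooth space-time mollification of Remark \ref{rem:space_time_approximation_of_she}, under which the shift becomes $h_n = h * \rho_n$ (heat convolution commutes with mollification), and mollification --- unlike sharp truncation --- is uniformly bounded on $\contisp^1$ since its kernels have fixed $L^1$-norm, so $h_n \to h$ in $C([0,T];\contisp^{1-\epsilon})$ is immediate. To complete your proof, either switch to mollified approximations as the paper does, or insert a proof of $\Pi_n h \to h$ in $C([0,T];\contisp^{\beta})$ for some $\beta \in (\alpha,1)$ along the lines sketched above.
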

\begin{proof}
  Take $g \in L^2(\R \times \torus)$ satisfying
  \begin{equation*}
    h(t) = \int_{-\infty}^t e^{(t-s)(\Delta-1)} g(s, \cdot) ds.
  \end{equation*}
  We define a probability measure $\P_g$ by
  \begin{equation*}
    \frac{d\P_g}{d\P} \defby \exp\left( - \xi(g) - \frac{\norm{g}_{L^2(\R \times \torus)}}{2} \right).
  \end{equation*}
  Since the Cameron-Martin space of the space-time white noise $\xi$ is $L^2(\R \times \torus)$,
  we have $\Law_{\P_g}(\xi + g) = \Law_{\P} (\xi)$.
  Let $\rho \in \S(\R \times \R^2)$ and set $\rho_n(t, x) \defby n^4 \rho(n^2 t, n x)$,
  \begin{equation*}
    \tilde{Z}_n(t) \defby \int_{-\infty}^t e^{(t-s)(\Delta - 1)}[ \xi*\rho_n(s, \cdot)] ds \quad \mbox{and}\quad
    \tilde{Z}_n^{:k:}(t) \defby H_k(\tilde{Z}_n(t), \tilde{\rconst}_n),
  \end{equation*}
  where
  \begin{equation*}
    \tilde{\rconst}_n \defby \int_{\R \times \torus} \abs{K*\rho_n(t, x)}^2 dt dx.
  \end{equation*}
  Similarly, we set
  \begin{equation*}
     h_n(t) \defby \int_{-\infty}^t e^{(t-s)(\Delta-1)}\big[{g*\rho_n}(s, \cdot)\big] ds, \quad
     T_h \tilde{Z}_n(t) \defby \tilde{Z}_n(t) + h_n(t)
  \end{equation*}
  and $T_h \tilde{Z}_n^{:k:}(t) \defby H_k(T_h \tilde{Z}_n(t), \rconst_n)$.
  Then we have
  \begin{equation*}
    \Law_{\P}((\tilde{Z}_n^{:k:})_{k=0}^{\infty}) = \Law_{\P_g}((T_h \tilde{Z}_n^{:k:})_{k=0}^{\infty}).
  \end{equation*}
  As mentioned in Remark \ref{rem:space_time_approximation_of_she}, we have
  $\tilde{Z}_n^{:k:} \to Z^{:k:}$ in $L^2(\P; C(\R_+; \contisp^{-\alpha}))$, and thus we have
  \begin{align*}
    T_h \tilde{Z}_n^{:k:} &= \sum_{l=0}^k \binom{k}{l} H_l(\tilde{Z}_n, \rconst_n) h_n^{k-l} \\
    &\to \sum_{l=0}^k \binom{k}{l} Z^{:l:} h \quad \mbox{in } L^2(\P) \\
    &= T_h Z^{:k:}.
  \end{align*}
  Since $\P$ and $\P_g$ are equivalent, we see that
  $\Law_{\P}(\underline{Z}) = \Law_{\P_g}(T_h \underline{Z})$.

  Now assume $\P( \underline{Z} \in A) > 0$. Then we have
  $\P_g(T_h \underline{Z} \in A) > 0$. Since $\P$ and $\P_g$ are equivalent, we obtain
  $\P(T_h \underline{Z} \in A) > 0$. As the converse similarly holds, we complete the proof.
\end{proof}

\subsection{Proof of the main theorem}\label{subsec:proof_of_main_thm}
\begin{lemma}\label{lem:key_lemma}
  For every $R \in \R_+$, we have
  \begin{equation*}
    (H_k(0, R))_{k=0}^{\infty} \in \Supp\{\Law(\underline{Z})\}.
  \end{equation*}
\end{lemma}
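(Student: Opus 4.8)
The plan is to combine the Cameron--Martin invariance of the support (Lemma \ref{lem:cameron_martin}) with the observation that a fast oscillating deterministic shift converts the renormalisation constant of the Wick powers from $\rconst_n$ into the prescribed value $R$. Since $\rconst_n \to \infty$ with $\rconst_n = O(\log n)$, for large $n$ the numbers $C_n \defby \rconst_n - R$ are nonnegative and satisfy $C_n = O(\log n)$, so Lemma \ref{lem:properties_of_h_n} supplies a sequence $\{h_n\} \subset \H$ with $\sup_t \norm{H_k(h_n(t), C_n)}_{\contisp^{-\alpha}} \to 0$ for every fixed $k$. By Lemma \ref{lem:cameron_martin} the support of $\Law(\underline Z)$ equals that of $\Law(T_{-h_n}\underline Z)$ for each $n$; hence it suffices to show that for every neighbourhood $U$ of $(H_k(0,R))_{k}$ there is some $n$ with $\P(T_{-h_n}\underline Z \in U) > 0$, which by Lemma \ref{lem:cameron_martin} is equivalent to $\P(\underline Z \in U) > 0$.

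The conceptual heart is a purely deterministic limit. Writing $\rconst_n = C_n + R$ and expanding a Hermite polynomial across the sum of two covariances by means of the generating function \eqref{eq:hermite_identity}, one obtains
\begin{equation*}
  H_k(-h_n, \rconst_n) = \sum_{j} \frac{k!}{(k-2j)!\, j!}\left(-\frac{R}{2}\right)^j H_{k-2j}(-h_n, C_n).
\end{equation*}
Every summand with $k-2j \geq 1$ tends to $0$ in $\contisp^{-\alpha}$, uniformly in $t$, by property (ii) of Lemma \ref{lem:properties_of_h_n} together with the parity relation $H_{k-2j}(-x,C) = (-1)^{k-2j} H_{k-2j}(x,C)$, while the single surviving term $k-2j = 0$ (present only for even $k$) produces exactly $H_k(0,R)$. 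Thus $H_k(-h_n, \rconst_n) \to H_k(0,R)$ in $\contisp^{-\alpha}$, uniformly in $t$; this is the step that manufactures the target renormalisation out of the oscillations of $h_n$.

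To relate $T_{-h_n}\underline Z$ to $H_k(-h_n,\rconst_n)$ I would use the spectral approximation $Z_n^{:l:} = H_l(Z_n, \rconst_n)$. Proposition \ref{prop:hermite_binomial_expansion} yields the exact identity
\begin{equation*}
  (T_{-h_n}\underline Z)_k = \sum_{l=0}^k \binom{k}{l}(Z^{:l:} - Z_n^{:l:})(-h_n)^{k-l} + H_k(Z_n - h_n, \rconst_n),
\end{equation*}
the last term being $\sum_l \binom{k}{l} Z_n^{:l:}(-h_n)^{k-l}$. The first (error) sum I would estimate in $\contisp^{-\alpha}$ via Proposition \ref{prop:paraproduct_estimates}, placing each factor $(-h_n)^{k-l}$ in $\contisp^{\beta}$ with $\beta > \alpha$; by Proposition \ref{prop:scaling_in_besov} its norm grows only like $C_n^{(k-l)/2} l_n^{\beta}$, poly-logarithmically in $n$, whereas $\expect\norm{Z^{:l:} - Z_n^{:l:}}_{\contisp^{-\alpha}}^2$ decays polynomially, so this contribution vanishes in $L^2$. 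On the event that the field $Z_n$ is small the remaining term is close to $H_k(-h_n, \rconst_n)$, since by Proposition \ref{prop:hermite_binomial_expansion}
\begin{equation*}
  H_k(Z_n - h_n, \rconst_n) - H_k(-h_n, \rconst_n) = \sum_{j=0}^{k-1} \binom{k}{j} H_j(-h_n, \rconst_n)\, Z_n^{k-j},
\end{equation*}
which carries a positive power of $Z_n$ in every summand.

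The main obstacle is this final step: I must exhibit one $n$ for which the small-field event and the event that the approximation error is small hold \emph{simultaneously} with positive probability, and these pull in opposite directions. Forcing $Z_n$ to be uniformly small is a Gaussian small-ball event whose probability degenerates as $n \to \infty$ (the pointwise variance $\rconst_n$ diverges), whereas the approximation error is tamed only by taking $n$ large; moreover $h_n$ is bounded only in the negative-regularity space $\contisp^{-\alpha}$, so the products $H_j(-h_n,\rconst_n)\,Z_n^{k-j}$ are genuinely singular and cannot be controlled by a weak norm of $Z_n$ alone. I expect the resolution to exploit that the low Fourier modes of $Z$, which govern the small-field event, are independent of the high modes, which govern the approximation error; for a fixed but sufficiently large $n$ the two events are then supported on essentially disjoint blocks of independent Gaussian coordinates, and the poly-logarithmic growth of $l_n$ recorded in Lemma \ref{lem:properties_of_h_n} is precisely what keeps the error term negligible in this regime.
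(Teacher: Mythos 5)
Your reduction via Lemma \ref{lem:cameron_martin}, your identity $(T_{-h_n}\underline Z)_k = \sum_{l}\binom{k}{l}(Z^{:l:}-Z_n^{:l:})(-h_n)^{k-l} + H_k(Z_n-h_n,\rconst_n)$, and your deterministic limit $H_k(-h_n,\rconst_n)\to H_k(0,R)$ are all correct and coincide with computations in the paper. But the "main obstacle" you flag at the end is precisely the crux of the lemma, and your proposal does not overcome it: with a purely deterministic shift $-h_n$, the term $H_k(Z_n-h_n,\rconst_n)$ retains all the randomness of $Z_n$, so you must intersect a Gaussian small-ball event for $Z_n$ (at scale $\epsilon/\mathrm{polylog}(n)$, since the coefficients $H_j(-h_n,\rconst_n)$ grow poly-logarithmically in any norm strong enough to control the products $H_j(-h_n,\rconst_n)Z_n^{k-j}$) with the event that the tail error is small, and these are coupled. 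Your independence heuristic is not rigorous as stated: $Z^{:l:}-Z_n^{:l:}$ is \emph{not} independent of the low modes --- by the multiplication formula it equals $\sum_{i<l}\binom{l}{i}Z_n^{:i:}(Z-Z_n)^{:l-i:}$, which contains $Z_n$-dependent factors --- so one would need this further decomposition plus quantitative conditional small-ball estimates, none of which the proposal supplies.

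The paper removes the obstacle by a different choice of shift: it shifts by $-Z_n-h_n$, i.e.\ it subtracts (an approximation of) \emph{the field itself}, so that the main term becomes exactly deterministic. Concretely, expanding $T_{-Z_n-h_n}Z^{:k:}$ and replacing $Z^{:k-l:}$ by $Z_n^{:k-l:}$ in the leading part, the Wick powers and ordinary powers of $Z_n$ cancel algebraically,
\begin{equation*}
  \sum_{l=j}^k (-1)^l \binom{k}{l}\binom{l}{j}\, Z_n^{:k-l:}\, Z_n^{l-j}
  = \frac{(-1)^j k!}{j!\,(k-j)!}\, H_{k-j}(0,\rconst_n),
\end{equation*}
so the leading part collapses to $H_k(-h_n,\rconst_n)$ with no random factor left; the only probabilistic input is that the error terms $h_n^j Z_n^{l-j}(Z^{:k-l:}-Z_n^{:k-l:})$ vanish in probability (Lemma \ref{lem:fin_dim_approxim_of_she}, polynomial decay beating poly-logarithmic growth), which is the part you did handle. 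Since $Z_n\notin\H$, the paper then replaces it by a pathwise approximation $\tilde Z_n\in\H$ built from smoothed Brownian motions (its STEP 2), and concludes $\omega$-by-$\omega$: almost every realization $\underline z=\underline Z(\omega)$ lies in the support; for fixed $\omega$ the shifts $\tilde h_n=-\tilde Z_n(\omega)-h_n$ are elements of $\H$, so $T_{\tilde h_n}\underline z$ stays in the support by Lemma \ref{lem:cameron_martin}; and these points converge (along a subsequence, a.s.) to $(H_k(0,R))_k$, so closedness of the support finishes. This randomized-shift maneuver --- allowing the Cameron--Martin shift to depend on the realization --- is the missing idea; without it, or without a fully executed small-ball/conditioning argument, your proof is incomplete.
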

\begin{proof}
  Set
  $Z_n(t) \defby \Pi_n Z(t)$, $\rconst_n \defby \expect[Z_n(t, x)^2]$
  and $C_n \defby \max\{\rconst_n - R, 0\}$.
  We fix $N \in \N$.
  Let $\{h_n\}_{n=1}^{\infty}$ be the sequence of functions constructed in Lemma \ref{lem:properties_of_h_n}
  corresponding to $N$ and $\{C_n\}_{n=1}^{\infty}$.

  \emph{STEP 1.} We first prove
  \begin{equation*}
    \lim_{n \to \infty} T_{-Z_n - h_n}Z^{:k:} = H_k(0, R) \quad \mbox{for } k=1, \ldots, N.
  \end{equation*}
  in probability.
  Strictly speaking, this is an abuse of notation as $Z_n \notin \H$.
  We come back to this problem in \emph{STEP 2}.

  We compute
  \begin{align}
    T_{-Z_n-h_n} Z^{:k:} ={}& \sum_{l=0}^k \binom{k}{l} (-1)^l Z^{:k-l:} (h_n + Z_n)^l \nonumber\\
    ={}& \sum_{l=0}^k \binom{k}{l} (-1)^l Z^{:k-l:} \sum_{j=0}^l \binom{l}{j} h_n^j Z_n^{l-j} \nonumber\\
    ={}& \sum_{j=0}^k h_n^j \sum_{l=j}^k (-1)^l \binom{k}{l} \binom{l}{j} Z^{:k-l:} Z_n^{:l-j:} \nonumber \\
    \begin{split}\label{eq:sum_of_h_and_etc}
      ={}&  \sum_{j=0}^k h_n^j \sum_{l=j}^k (-1)^l \binom{k}{l} \binom{l}{j} Z_n^{:k-l:} Z_n^{:l-j:} \\
      & + \sum_{j=0}^k h_n^j \sum_{l=j}^k (-1)^l \binom{k}{l} \binom{l}{j} (Z^{:k-l:} - Z_n^{:k-l:}) Z_n^{l-j}.
    \end{split}
  \end{align}
  We evaluate the first term of \eqref{eq:sum_of_h_and_etc}.
  Applying Proposition \ref{prop:hermite_binomial_expansion}, we observe
  \begin{align*}
    \sum_{l=j}^k (-1)^l \binom{k}{l} \binom{l}{j} Z_n^{:k-l:} Z_n^{l-j}
    &= \sum_{l=0}^{k-j} (-1)^{j+l} \frac{k!}{(k-j-l)! j! l!} H_{k-j-l}(Z_n, \rconst_n) Z_n^l \\
    &=\frac{(-1)^j k!}{j!(k-j)!} H_{k-j}(0, \rconst_n),
  \end{align*}
  and hence, applying Proposition \ref{prop:hermite_binomial_expansion} again,
  the first term of \eqref{eq:sum_of_h_and_etc} equals
  \begin{align*}
    \sum_{j=0}^k (-1)^j \binom{k}{j} h_n^j H_{k-j}(0, \rconst_n)
    &= H_k(-h_n, \rconst_n) \\
    &= \frac{1}{k!} \left. \left( \frac{d}{dt} \right)^k \right\rvert_{t=0}
    \exp\left(-\frac{t^2 R}{2} \right) \exp\left(-t h_n - \frac{C_n t^2}{2} \right),
  \end{align*}
  which converges to $H_k(0, R)$ by Lemma \ref{lem:properties_of_h_n}.

  We next show the second term of \eqref{eq:sum_of_h_and_etc} converges to $0$, for which it suffices to show
  \begin{equation*}
    \lim_{n \to \infty} \sup_{0 \leq t \leq T}
    \norm{h_n^j(t) Z_n^k(t) (Z^{:l:}(t) - Z^{:l:}_n(t))}_{\contisp^{-\alpha}} = 0
  \end{equation*}
  in probability.
  Let $h_n(t, x) = \sqrt{C_n} f_{n, t}(l_n x)$ be the representation given in
  the condition (i) of Lemma \ref{lem:properties_of_h_n}.
  Then we have
  \begin{align*}
    \norm{h_n^j(t) Z_n^k(t)(Z^{:l:}(t) - Z_n^{:l:}(t))}_{\contisp^{-\alpha}}
      &\lesssim C_n^{\frac{j}{2}} \norm{f_{n,t}(l_n \cdot)}_{\contisp^1}^j
      \norm{Z_n^k(t) (Z^{:l:}(t) - Z_n^{:l:}(t))}_{\contisp^{-\alpha}} \\
      &\lesssim C_n^{\frac{j}{2}} l_n^{j}
      \norm{Z_n^k(t) (Z^{:l:}(t) - Z_n^{:l:}(t))}_{\contisp^{-\alpha}}.
  \end{align*}
  The last inequality is derived by using Proposition \ref{prop:scaling_in_besov}-(i).
  Therefore, it comes down to proving
  \begin{equation*}
    \lim_{n \to \infty} l_n^j \expect[ \sup_{0 \leq t \leq T} \norm{Z_n^k(t)
    (Z^{:l:}(t) - Z_n^{:l:}(t))}_{\contisp^{-\alpha}}^2 ] = 0
  \end{equation*}
  for every $j, k, l \in \N$.
  However, this easily follows from Proposition \ref{prop:hermite_inversion} and Lemma \ref{lem:fin_dim_approxim_of_she}.

  \emph{STEP 2.} The problem is that $Z_n$ is not an element of $\H$.
  Let $W_{m, n}$ be a smooth approximation of $W_m$ which satisfies
  \begin{equation}\label{eq:smooth_approximation_of_W_m}
    \expect[ \sup_{t \in [-n, T]} \abs{W_m(t) - W_{m, n}(t)}^2 ] \leq 2^{-n}.
  \end{equation}
  See Remark \ref{rem:bm_from_white_noise} for the definition of $W_m$.
  We set
  \begin{equation*}
    \tilde{Z}_n(t) \defby \sum_{\abs{m} \leq n} \left(\int_{-n}^t e^{-(1+4\pi^2 \abs{m}^2)(t-s)} dW_{m,n}(s)
    \right)  e_m.
  \end{equation*}
  Note that $\tilde{Z}_n \in \H$.
  We claim
  \begin{equation}\label{eq:Z_and_Z_tilde}
    \lim_{n \to \infty} \expect[\sup_{0 \leq t \leq T} \norm{Z_n(t) - \tilde{Z}_n(t)}_{\contisp^1}^2] = 0.
  \end{equation}
  We have
  \begin{multline*}
    Z_n(t) - \tilde{Z}_n(t) \\
    = \sum_{\abs{m} \leq n} \int_{-\infty}^{-n} e^{(t-s)(\Delta-1)} e_m dW_m(s)
    + \sum_{\abs{m}\leq n} \int_{-n}^t e^{(t-s)(\Delta-1)} e_m d(W_m(s) - W_{m, n}(s)).
  \end{multline*}
  The $L^2(\P)$-norm of the first term is bounded by
  \begin{align*}
    \MoveEqLeft
    \expect\Big[ \sup_{0 \leq t \leq T} \Big\lvert \sum_{\abs{m}\leq n} \int_{-\infty}^{-n}
    e^{-(1+4\pi^2 \abs{m}^2)(t-s)} dW_m(s) \norm{e_m}_{\contisp^1}\Big\rvert^2 \Big] \\
    &\lesssim \sum_{\abs{m} \leq n} (1+\abs{m})^2 \expect
    \Big[ \sup_{0\leq t \leq T} \Big\lvert \int_{-\infty}^{-n} e^{-(1+4\pi^2 \abs{m}^2)(t-s)} dW_m(s) \Big\rvert^2 \Big] \\
    &\lesssim \sum_{\abs{m} \leq n} (1+\abs{m})^2 \int_{-\infty}^{-n} e^{2(1+4\pi^2\abs{m}^2)s} ds \\
    &\lesssim \sum_{\abs{m} \leq n} e^{-2(1+4\pi^2 \abs{m}^2)n},
  \end{align*}
  which converges to $0$.

  The second term equals
  \begin{multline*}
    \sum_{\abs{m} \leq n} \Big\{
    W_m(t) - W_{m, n}(t)
    -e^{-(1+4\pi^2 \abs{m}^2)(t+n)}(W_m(-n) - W_{m, n}(-n)) \\
    +(1+4\pi^2 \abs{m}^2) \int_{-n}^t e^{-(1+4\pi^2 \abs{m}^2)(t-s)}
    (W_m(s) - W_{m,n}(s)) ds \Big\} e_m
  \end{multline*}
  Using \eqref{eq:smooth_approximation_of_W_m}, calculation similar to the first term yields
  \begin{equation*}
    \lim_{n \to \infty} \expect \Big[ \sup_{0 \leq t \leq T}
     \Big\lVert \sum_{\abs{m}\leq n} \int_{-n}^t e^{(t-s)(\Delta-1)} e_m d(W_m(s) - W_{m, n}(s))
     \Big\rVert_{\contisp^{1}}^2 \Big] = 0.
  \end{equation*}
  Now the proof of \eqref{eq:Z_and_Z_tilde} is complete.

  \emph{STEP 3.} We have
  \begin{equation*}
    T_{-\tilde{Z}_n - h_n}Z^{:k:}
    = T_{-Z_n-h_n} Z^{:k:} + \sum_{l=1}^k \binom{k}{l} (Z_n - \tilde{Z}_n)^l T_{-Z_n-h_n}Z^{:k-l:}.
  \end{equation*}
  Therefore, $\lim_{n \to \infty} T_{-\tilde{Z}_n - h_n} Z^{:k:} = H_k(0, R)$ in probability.

  Take $\underline{z} = (z_k)_{k=0}^{\infty} \in \Supp\{\Law(\underline{Z})\}$.
  Then there exists a sequence $\{\tilde{h}_n\}_{n=1}^{\infty}\subset \H$ such that
  \begin{equation*}
    \lim_{n \to \infty} T_{\tilde{h}_n} \underline{z} = (H_k(0, R))_{k=0}^{\infty} \quad \mbox{in } \zset.
  \end{equation*}
  Since Lemma \ref{lem:cameron_martin} implies $T_{\tilde{h}_n} \underline{z} \in \Supp\{\Law(\underline{Z})\}$ and
  the support of a measure is closed, we conclude
  $(H_k(0, R))_{k=0}^{\infty} \in \Supp\{\Law(\underline{Z})\}$.
\end{proof}

\begin{proof}[Proof of Theorem \ref{thm:main}]
  We denote by $\mathcal{X}$ the right hand side of \eqref{eq:main}.
  Let
  $Z_n \defby \Pi_n Z$, $\rconst_n \defby \expect[ Z_n(t, x)^2 ]$ and
  $Z_n^{:k:} \defby H_k(Z_n, \rconst_n)$. Then we have $\lim_{n \to \infty} Z_n^{:k:} = Z^{:k:}$.
  Although $Z_n \notin \mathcal{X}$, a trick similar to \emph{STEP 2} of Lemma \ref{lem:key_lemma} allows us to show
  \begin{equation*}
    \Supp\{ \Law(\underline{Z}) \} \subset \mathcal{X}.
  \end{equation*}

  We move to prove the other inclusion. As
  $(H_k(0, R))_{k=0}^{\infty} \in \Supp\{\Law(\underline{Z})\}$ by Lemma \ref{lem:key_lemma},
  for every $h \in \H$ and $R \geq 0$, we have
  \begin{equation*}
    (H_k(h, R))_{k=1}^{\infty} \in \Supp\{ \Law(T_h \underline{Z})\}.
  \end{equation*}
  Lemma \ref{lem:cameron_martin} implies $(H_k(h, R))_{k=0}^{\infty} \in \Supp\{\Law(\underline{Z})\}$.
  Since the support of a measure is closed, we complete the proof.
\end{proof}

\subsection{Complex version}\label{subsec:complex}
In this subsection, all functions are allowed to be complex-valued.
We can consider a complex version of the additive stochastic heat equation
\begin{equation*}
  \partial_t Z_{\C} = \mu (\Delta - 1) Z_{\C} + \xi_{\C}.
\end{equation*}
Here $\Re(\mu) \in (0, \infty)$ and $\xi_{\C}$ is a complex space-time white noise.
For more information, see \cite{Mat19} or \cite{Tre19}.
Rigorously, $Z_{\C}$ is a $C(\R_+; \contisp^{-\alpha})$-valued random variable such that
for $\phi \in C^{\infty}(\torus; \C)$ we have
\begin{equation*}
  \inp{Z_{\C}(t)}{\phi} = \int_{\R \times \torus} \inp{\kernel_{\C, \mu}(t-s, \cdot - y)}{\phi} \xi_{\C}(ds dy)
  \quad \mbox{almost surely,}
\end{equation*}
where the integral is in the sense of complex It\^o-Wiener integral and
\begin{equation*}
  \kernel_{\C, \mu}(t, x) \defby \sum_{m \in \Z^2} e^{-\mu(1+ 4 \pi^2 \abs{m}^2)t} e^{2 \pi i m\cdot x}.
\end{equation*}
To define Wick powers of $Z_{\C}$, we introduce complex Hermite polynomials
\begin{equation*}
  H_{k,l}(z, c) \defby \sum_{m=0}^{\min\{k, l\}} m! \binom{k}{m} \binom{l}{m} (-c)^m z^{k-m} \conj{z}^{l-m}.
\end{equation*}
If we set $\rconst_n \defby \expect[ \abs{\Pi_n Z_{\C}(t) (x)}^2]$, then
\begin{equation*}
  H_{k,l}(\Pi_n Z_{\C}, \rconst_n) \mbox{ converges in } L^p(\P; C([0, T]; \contisp^{-\alpha})).
\end{equation*}
We denote the limit by $Z_{\C}^{:k, l:}$.
As in the real case, we can determine the support of the law of $(Z_{\C}^{:k,l:})_{k,l \in \N}$.
\begin{theorem}\label{thm:support_complex}
  Set
  \begin{equation*}
    \H_{\C} \defby \set{h \given h(t) = \int_{-\infty}^t e^{\mu(t-s) (\Delta - 1)} g(s, \cdot) ds
    \mbox{ for some } g \in L^2(\R \times \torus; \C)}.
  \end{equation*}
  Then we have
  \begin{equation*}
    \Supp\{ \Law((Z_{\C}^{:k,l:})_{k,l \in \N}) \}
    = \overline{\set{(H_{k,l}(h, R))_{k,l \in \N} \given h \in \H_{\C}, R \in \R_+}}.
  \end{equation*}
\end{theorem}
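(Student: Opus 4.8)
The plan is to run the entire proof of Theorem \ref{thm:main} essentially verbatim, replacing the one-parameter family $H_k(\cdot, C)$ by the two-parameter family $H_{k,l}(\cdot, c)$ and the Cameron--Martin space $L^2(\R\times\torus)$ by its complex counterpart $L^2(\R\times\torus;\C)$. The inclusion of the right-hand side into the support is again the easy direction, following from the smooth approximations $H_{k,l}(\Pi_n Z_\C, \rconst_n)\to Z_\C^{:k,l:}$ together with the Cameron--Martin theorem for $\xi_\C$, whose Cameron--Martin space is $L^2(\R\times\torus;\C)$. This yields the complex analog of Lemma \ref{lem:cameron_martin}, namely $\Supp\{\Law((Z_\C^{:k,l:}))\} = \Supp\{\Law(T_h(Z_\C^{:k,l:}))\}$ for every $h\in\H_\C$, where
\[
  (T_h\underline{z})_{k,l}\defby\sum_{p=0}^{k}\sum_{q=0}^{l}\binom{k}{p}\binom{l}{q}z_{p,q}\,h^{k-p}\,\conj{h}^{\,l-q}.
\]
As in the real case, everything reduces to the key statement $(H_{k,l}(0,R))_{k,l}\in\Supp\{\Law((Z_\C^{:k,l:}))\}$ for every $R\geq 0$.

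The algebraic backbone is the generating identity $\exp(sz + t\conj{z} - stc) = \sum_{k,l\geq0}\tfrac{s^k t^l}{k!\,l!}H_{k,l}(z,c)$, which one checks directly from the definition of $H_{k,l}$. It yields both the shift identity
\[
  H_{k,l}(z+w,c)=\sum_{p=0}^{k}\sum_{q=0}^{l}\binom{k}{p}\binom{l}{q}H_{p,q}(z,c)\,w^{k-p}\,\conj{w}^{\,l-q}
\]
(the analog of Proposition \ref{prop:hermite_binomial_expansion}) and an inversion of $z^k\conj{z}^{\,l}$ into a finite combination of the $H_{p,q}(z,c)$ (the analog of Proposition \ref{prop:hermite_inversion}). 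A crucial simplification is that setting $z=0$ gives $H_{k,l}(0,R)=\delta_{k,l}\,k!\,(-R)^k$, so the target is concentrated on the diagonal. With these identities the key computation transcribes directly: writing $Z_{\C,n}\defby\Pi_n Z_\C$, $\rconst_n\defby\expect[\abs{Z_{\C,n}}^2]$ and $C_n\defby\max\{\rconst_n-R,0\}$ (so $C_n=O(\log n)$), the shift identity applied with $z=Z_{\C,n}$, $w=-Z_{\C,n}-h_n$ shows that the main term in the expansion of $T_{-Z_{\C,n}-h_n}Z_\C^{:k,l:}$ equals $H_{k,l}(-h_n,\rconst_n)$, while the error terms carry a factor $Z_\C^{:p,q:}-Z_{\C,n}^{:p,q:}$ and vanish after multiplication by the scaled $h_n$ exactly as in STEP 1, using Proposition \ref{prop:scaling_in_besov}-(i) and the inversion identity. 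Finally $H_{k,l}(-h_n,\rconst_n)\to H_{k,l}(0,R)$ because the generating function factorises as $\exp(-stR)\exp(-sh_n-t\conj{h_n}-stC_n)$ and the second factor tends coefficientwise to $1$ once $\norm{H_{p,q}(h_n,C_n)}_{\contisp^{-\alpha}}\to0$ for $(p,q)\neq(0,0)$.

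The one step requiring genuine work is the complex replacement of Lemma \ref{lem:existence_of_nice_f}: for each $N$ I must produce a smooth $f\in C^\infty(\torus;\C)$ with $\int_\torus H_{k,l}(f(x),1)\,dx=0$ for all $(k,l)\neq(0,0)$ in the relevant finite range. Here I would exploit the product ansatz $f(x_1,x_2)=\rho(x_1)\,e^{2\pi i x_2}$. For such $f$ the phase forces $\int_\torus f^a\conj{f}^{\,b}\,dx=\delta_{a,b}\int_0^1\rho(x_1)^{2a}\,dx_1$, so every off-diagonal moment vanishes automatically, and a short computation with the definition of $H_{k,l}$ shows $\int_\torus H_{k,l}(f,1)\,dx=0$ for all $(k,l)\neq(0,0)$ as soon as $\int_0^1\rho^{2a}\,dx_1=a!$ for $a=1,\ldots,N$. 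Since $\int_0^1(\log x_1^{-1})^a\,dx_1=a!$, the singular profile $\rho_0(x_1)=\sqrt{\log x_1^{-1}}$ matches these moments exactly; mollifying it near its singularities and correcting the finitely many perturbed diagonal moments by the one-dimensional version of the implicit function argument of Lemma \ref{lem:existence_of_nice_f} produces a smooth admissible $\rho$, hence a smooth admissible $f$.

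I expect this construction of $f$ to be the main obstacle, since the automatic off-diagonal vanishing must be preserved through the mollification so that only the $N$ diagonal constraints need to be enforced. Once $f$ is fixed, the complex analog of Lemma \ref{lem:properties_of_h_n} — producing $h_n(t,x)=\sqrt{C_n}\,f_{n,t}(l_n x)$ with $l_n\sim(\log n)^{\log\log n}$, $\sup_{t,n}\norm{f_{n,t}}_{\contisp^1}<\infty$ and $\sup_t\norm{H_{k,l}(h_n(t),C_n)}_{\contisp^{-\alpha}}\to0$ — follows from Proposition \ref{prop:scaling_in_besov}-(ii) applied to $H_{k,l}(f)$, whose zeroth Fourier mode vanishes by construction. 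The remaining steps, namely the $\H_\C$-approximation of $Z_{\C,n}$ by a smoothing of the driving complex Brownian motions (the role of STEP 2) and the final closure argument, are then identical to the real case.
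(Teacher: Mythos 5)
Your proposal is correct and follows essentially the same route as the paper: the paper likewise declines to rewrite the real-case argument, stating it transfers verbatim, and identifies the only genuine modification as the complex analog of Lemma \ref{lem:existence_of_nice_f}, for which it uses exactly your function $f(x) = \sqrt{\log(x_1^{-1})}\, e^{2\pi i x_2}$ (your $\rho_0(x_1) e^{2\pi i x_2}$), smoothed as in the real case. Your additional details --- the generating identity for $H_{k,l}$, the diagonal collapse $H_{k,l}(0,R) = \delta_{k,l}\, k! \, (-R)^k$, and the observation that the product ansatz preserves off-diagonal vanishing through mollification so that only the $N$ diagonal moment constraints need the implicit-function correction --- are faithful elaborations of what the paper leaves implicit.
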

We will not provide a proof as it is parallel to the real case. We only explain how to modify
Lemma \ref{lem:existence_of_nice_f}.
For the complex case, we need to find a smooth $f$ with
\begin{equation}\label{eq:nice_f_complex}
  \int_{\torus} H_{k, l}(f(x), 1) dx = 0 \quad \mbox{for } (k, l) \in \{0, 1, \ldots, N\}^2 \setminus \{(0, 0)\}.
\end{equation}
We observe $f(x) \defby \sqrt{\log(x_1^{-1})} e^{2 \pi i x_2}$ satisfies \eqref{eq:nice_f_complex} and hence
we modify $f$ to become smooth as in Lemma \ref{lem:existence_of_nice_f}.

Finally, we note that Theorem \ref{thm:support_complex} can be used to prove the support theorem,
hence the exponential ergodicity of the complex Ginzburg-Landau equation studied in
\cite{Mat19} and \cite{Tre19}.

\section{Support theorem: Gaussian multiplicative chaos}\label{sec:gmc}
In this section, we use Sobolev spaces $\sobolev^{\gamma}$. They are Hilbert spaces with inner products defined by
\begin{equation*}
  \inp{\phi}{\psi}_{\sobolev^{\gamma}}
  = \sum_{m \in \Z^2} (1 + 4\pi^2 \abs{m}^2)^{\gamma} \inp{\phi}{e_m} \conj{\inp{\psi}{e_m}}.
\end{equation*}
It is well-known that $\sobolev^{\gamma}$ is isomorphic to $\besov^{\gamma}_{2, 2}$.
See \cite[Page 99]{Bahouri2011}.

Let $\{Z(t)\}_{t \in \R_+}$ be the stationary solution of the additive stochastic heat equation as before.
The random variable $Z(t)$ is identified with a massive Gaussian free field $X$, i.e. a $\contisp^{-\alpha}$-valued
centered Gaussian vector with covariance structure
\begin{equation*}
  \expect[ \inp{X}{\phi} \inp{X}{\psi} ] = \int_{\torus \times \torus} \green(x, y) \phi(x) \psi(y) dx dy
  \quad \phi, \psi \in C^{\infty}(\torus),
\end{equation*}
where
\begin{equation*}
  \green(x, y) \defby \frac{1}{2} \sum_{m \in \Z^2} \frac{e_m(x) \conj{e_m}(y)}{1 + 4 \pi^2 \abs{m}^2}
\end{equation*}
is the Green's function with respect to $1 - \Delta$.
In particular, the Cameron-Martin space of $X$ is $\sobolev^1$.

As in the case of the additive stochastic heat equation, we can define Wick powers $X^{:k:}$ of $X$.
Theorem \ref{thm:main} implies the following;
\begin{corollary}\label{cor:support_of_gff}
  Let $X$ be the massive Gaussian free field as above. Then we have
  \begin{equation*}
    \Supp\{\Law((X^{:k:})_{k=0}^{\infty})\}
    = \overline{\set{(H_k(h, R))_{k=0}^{\infty} \given h \in C^{\infty}(\torus), R \geq 0}}^{(\contisp^{-\alpha})^{\N}}.
  \end{equation*}
\end{corollary}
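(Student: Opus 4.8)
The plan is to deduce Corollary \ref{cor:support_of_gff} from Theorem \ref{thm:main} by transporting the support through the time-evaluation map, and then identifying the resulting trace space with $C^\infty(\torus)$. Fix $t \in \R_+$ and define $\operatorname{ev}_t \colon \zset \to (\contisp^{-\alpha})^{\N}$ by $\operatorname{ev}_t((z_k)_{k=0}^{\infty}) \defby (z_k(t))_{k=0}^{\infty}$, which is continuous for the product topologies. Since $Z(t)$ has the law of $X$, and both $(Z^{:k:}(t))_{k}$ and $(X^{:k:})_{k}$ arise as the $L^2$-limit of the renormalized polynomials $H_k(\Pi_n \,\cdot\,, \rconst_n)$ evaluated on the same Gaussian field, the push-forward $(\operatorname{ev}_t)_* \Law(\underline{Z})$ coincides with $\Law((X^{:k:})_{k=0}^{\infty})$. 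Both $\zset$ and $(\contisp^{-\alpha})^{\N}$ are Polish, so every Borel probability measure is concentrated on its support; hence for a continuous map $f$ between Polish spaces one has $\Supp\{f_* \mu\} = \overline{f(\Supp\{\mu\})}$. Indeed, $\overline{f(\Supp\{\mu\})} \subseteq \Supp\{f_*\mu\}$ follows from continuity, while the reverse inclusion uses $\mu(\Supp\{\mu\}) = 1$.

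Applying this with $f = \operatorname{ev}_t$ and $\mu = \Law(\underline{Z})$, and combining the continuity of $\operatorname{ev}_t$ with the elementary identity $\overline{f(\overline{A})} = \overline{f(A)}$ and Theorem \ref{thm:main}, I obtain
\begin{equation*}
  \Supp\{\Law((X^{:k:})_{k=0}^{\infty})\} = \overline{\set{(H_k(h(t), R))_{k=0}^{\infty} \given h \in \H, \ R \geq 0}}^{(\contisp^{-\alpha})^{\N}}.
\end{equation*}
It therefore remains to show that this closure agrees with the one in the statement, i.e.\ that replacing the trace set $\set{h(t) \given h \in \H}$ by $C^\infty(\torus)$ does not change the closure.

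Next I would identify $\set{h(t) \given h \in \H}$. Writing $\lambda_m \defby 1 + 4\pi^2 \abs{m}^2$ and $h(t) = \int_{-\infty}^t e^{(t-s)(\Delta-1)} g(s, \cdot)\, ds$, the spatial Fourier coefficients satisfy $\hat{h}(t, m) = \int_{-\infty}^t e^{-\lambda_m (t-s)} \hat{g}(s, m)\, ds$. Cauchy--Schwarz in $s$ gives $\lambda_m \abs{\hat{h}(t, m)}^2 \leq \tfrac12 \int_{-\infty}^t \abs{\hat{g}(s, m)}^2\, ds$, whence $\norm{h(t)}_{\sobolev^1}^2 \leq \tfrac12 \norm{g}_{L^2(\R \times \torus)}^2 < \infty$ and $\set{h(t) \given h \in \H} \subseteq \sobolev^1$. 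Conversely, given $\psi \in \sobolev^1$ (in particular any $\psi \in C^\infty(\torus)$), the choice $\hat{g}(s, m) \defby 2\lambda_m \hat{\psi}(m)\, e^{-\lambda_m(t-s)} \indic_{\{s \leq t\}}$ defines $g \in L^2(\R \times \torus)$ with $\hat{h}(t,m) = \hat\psi(m)$, so that $h(t) = \psi$. Thus $C^\infty(\torus) \subseteq \set{h(t) \given h \in \H} \subseteq \sobolev^1$.

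The two closures will then coincide by density: $C^\infty(\torus)$ is dense in $\sobolev^1$, so it suffices to check that $\psi \mapsto (H_k(\psi, R))_{k=0}^{\infty}$ is continuous from $\sobolev^1$ into $(\contisp^{-\alpha})^{\N}$. As each $H_k(\cdot, R)$ is a polynomial, this reduces to the continuity of $\psi \mapsto \psi^j$. Here lies the only genuinely delicate point: in two dimensions $\sobolev^1 = H^1(\torus)$ does not embed into $L^\infty$, so the products cannot be controlled by a uniform sup-norm bound. Instead I would use the embedding $\sobolev^1 \hookrightarrow L^q(\torus)$ for every $q < \infty$: if $\psi_n \to \psi$ in $\sobolev^1$ then $\psi_n \to \psi$ in every $L^q$, hence $\psi_n^j \to \psi^j$ in every $L^q$ by H\"older's inequality, and finally $L^q \hookrightarrow \besov^0_{q, \infty} \hookrightarrow \contisp^{-2/q} \hookrightarrow \contisp^{-\alpha}$ once $q > 2/\alpha$. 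This yields the required continuity, and together with the inclusions above it shows that the closure over $\set{h(t) \given h \in \H}$ equals the closure over $C^\infty(\torus)$, completing the proof. Every step apart from this last product-continuity estimate is a soft consequence of Theorem \ref{thm:main} and the push-forward formula for supports.
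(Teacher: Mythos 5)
The paper states this corollary without proof, as an immediate consequence of Theorem \ref{thm:main}, and your argument supplies precisely that implication — push the support forward under the continuous evaluation map at a fixed time (using $\Supp\{f_*\mu\} = \overline{f(\Supp\{\mu\})}$ on Polish spaces and $\Law(Z(t)) = \Law(X)$), then identify the closure over the trace set $\{h(t) : h \in \H\}$ with the closure over $C^{\infty}(\torus)$ via the sandwich $C^{\infty}(\torus) \subseteq \{h(t) : h \in \H\} \subseteq \sobolev^1$ together with the continuity of $\psi \mapsto (H_k(\psi, R))_{k}$ from $\sobolev^1$ (through $L^q$ with $q > 2/\alpha$) into $(\contisp^{-\alpha})^{\N}$ — so your proof is correct and follows the intended route. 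One caveat worth recording: for a general $\psi \in \sobolev^1$ your constructed $h$ need not belong to $\H$, since membership requires $h \in C(\R_+; \contisp^1)$ and $h(t) = \psi$ may fail to lie in $\contisp^1$ in two dimensions; however, you only invoke the construction for smooth $\psi$, for which $h(u) = e^{\abs{u-t}(\Delta - 1)}\psi$ does lie in $\H$, so the sandwich you actually use (and hence the proof) stands.
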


Fix $\abs{\gamma} < \sqrt{8 \pi}$ and $\beta \in (\frac{\gamma^2}{8 \pi}, 1)$.
Set $X_n \defby \Pi_n X$ and $\rconst_n \defby \expect[X_n(x)^2]$.
Then, as shown in \cite[Theorem 2.2]{HKK19},
\begin{equation*}
  \wick{e^{\gamma X_n}} \,\, \defby e^{\gamma X_n - \frac{\gamma^2}{2} \rconst_n}
  = \sum_{k=0}^{\infty} \frac{\gamma^k}{k!} X_n^{:k:}
\end{equation*}
converges in $L^2(\P; \sobolev^{\beta})$. The limit, denoted by $\wick{e^{\gamma X}}$,
is called Gaussian multiplicative chaos.
A strategy similar to the proof of Theorem \ref{thm:main} enables us to determine the support of the law of
$\wick{e^{\gamma X}}$.

For a smooth function $h$ on $\torus$, we define $T_h: \sobolev^{-\beta} \to \sobolev^{-\beta}$ by
$T_h f \defby e^{\gamma h} f$.
By Proposition \ref{prop:paraproduct_estimates}, $T_h$ is homeomorphic.
\begin{lemma}\label{lem:cameron_martin_for_gmc}
  For every smooth function $h$ on $\torus$, we have
  \begin{equation*}
    \Supp\{\Law(T_h \wick{e^{\gamma X}} )\} = \Supp\{\Law(\wick{e^{\gamma X}})\}.
  \end{equation*}
\end{lemma}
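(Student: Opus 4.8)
The plan is to mirror the proof of Lemma \ref{lem:cameron_martin} line by line, replacing the white-noise Cameron--Martin shift by the Cameron--Martin shift of the massive Gaussian free field $X$ and the Wick-power binomial expansion by the much simpler exponential identity. Since the Cameron--Martin space of $X$ is $\sobolev^1$ and $h$ is smooth, we have $h \in \sobolev^1$. Hence, by the Cameron--Martin theorem for the Gaussian vector $X$, there is a probability measure $\P_h$, equivalent to $\P$, under which $X + h$ has the same law as $X$ has under $\P$; concretely one may take $\frac{d\P_h}{d\P} = \exp(-I_h - \tfrac12 \norm{h}_{\sobolev^1}^2)$, where $I_h$ is the first-chaos (Paley--Wiener) element associated with $h$, so that $\Law_{\P_h}(X + h) = \Law_{\P}(X)$.

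First I would work with the approximations $X_n \defby \Pi_n X$. Writing $h_n \defby \Pi_n h$ and $\rconst_n = \expect[X_n(x)^2]$, the crucial point is that a \emph{deterministic} shift leaves the Wick renormalisation constant unchanged, so that
\begin{equation*}
  \wick{e^{\gamma(X_n + h_n)}} = e^{\gamma(X_n + h_n) - \frac{\gamma^2}{2} \rconst_n} = e^{\gamma h_n} \, \wick{e^{\gamma X_n}}.
\end{equation*}
Because $\Pi_n(X + h) = X_n + h_n$, the defining property of $\P_h$ gives $\Law_{\P}(X_n) = \Law_{\P_h}(X_n + h_n)$; pushing this identity forward through the fixed map $w \mapsto e^{\gamma w - \frac{\gamma^2}{2}\rconst_n}$ and using the display above yields
\begin{equation*}
  \Law_{\P}\big(\wick{e^{\gamma X_n}}\big) = \Law_{\P_h}\big(e^{\gamma h_n} \, \wick{e^{\gamma X_n}}\big).
\end{equation*}

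Next I would pass to the limit $n \to \infty$. By the construction recalled before the lemma, $\wick{e^{\gamma X_n}} \to \wick{e^{\gamma X}}$ in $L^2(\P; \sobolev^{\beta})$, and since $\P_h \sim \P$ this convergence also holds in probability under $\P_h$. As $h$ is smooth, $h_n \to h$ in $C^{\infty}(\torus)$, hence $e^{\gamma h_n} \to e^{\gamma h}$ in $C^{\infty}(\torus)$; combining this with the continuity of the pointwise product $(\phi, f) \mapsto \phi f$ (the same estimate from Proposition \ref{prop:paraproduct_estimates} that makes $T_h$ a homeomorphism), one obtains $e^{\gamma h_n} \, \wick{e^{\gamma X_n}} \to e^{\gamma h} \, \wick{e^{\gamma X}} = T_h \wick{e^{\gamma X}}$ in probability in $\sobolev^{\beta} \hookrightarrow \sobolev^{-\beta}$. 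Passing to the limit in the displayed law identity gives
\begin{equation*}
  \Law_{\P}\big(\wick{e^{\gamma X}}\big) = \Law_{\P_h}\big(T_h \wick{e^{\gamma X}}\big).
\end{equation*}
Exactly as at the end of Lemma \ref{lem:cameron_martin}, if $\P(\wick{e^{\gamma X}} \in A) > 0$ for some Borel set $A$, then $\P_h(T_h \wick{e^{\gamma X}} \in A) > 0$, and the equivalence $\P \sim \P_h$ upgrades this to $\P(T_h \wick{e^{\gamma X}} \in A) > 0$; the converse is symmetric, so the two laws share the same null sets and hence the same support.

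The argument is in fact easier than Lemma \ref{lem:cameron_martin}: there is no analogue of the difficulty that the approximating shift fails to lie in the Cameron--Martin space (here $h$ is already smooth, so no auxiliary smoothing of a rough shift is needed), and the binomial bookkeeping collapses to the single exponential identity. The only step demanding genuine care is therefore the joint convergence $e^{\gamma h_n} \wick{e^{\gamma X_n}} \to T_h \wick{e^{\gamma X}}$, i.e.\ the continuity of the multiplier $T_h$ together with the $L^2$-convergence of the chaos series; both are supplied by Proposition \ref{prop:paraproduct_estimates} and by the convergence of $\wick{e^{\gamma X_n}}$ recalled from \cite{HKK19}.
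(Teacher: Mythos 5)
Your proposal is correct and follows essentially the same route as the paper: define $\P_h$ by the Cameron--Martin density $\exp(-\inp{X}{h}_{\sobolev^1} - \tfrac{1}{2}\norm{h}_{\sobolev^1}^2)$, derive $\Law_{\P}(\wick{e^{\gamma X}}) = \Law_{\P_h}(T_h \wick{e^{\gamma X}})$ by passing the identity through the approximations $X_n = \Pi_n X$, and conclude from the equivalence $\P \sim \P_h$ exactly as in Lemma \ref{lem:cameron_martin}; your write-up simply makes explicit the steps the paper compresses into ``the remainder of the proof is the same.'' The only cosmetic point is that the convergence of $\wick{e^{\gamma X_n}}$ takes place in $L^2(\P; \sobolev^{-\beta})$ rather than $\sobolev^{\beta}$ (a sign slip inherited from the paper's own text), which does not affect the argument.
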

\begin{proof}
  We define a probability measure $\P_h$ by
  \begin{equation*}
    \frac{d \P_h}{d \P} \defby \exp\Big( -\inp{X}{h}_{\sobolev^1} - \frac{\norm{h}_{\sobolev^1}^2}{2} \Big).
  \end{equation*}
  Then we have $\Law_{\P_h}( X + h ) = \Law_{\P} (X)$. The remainder of the proof is the same as the proof of
  \ref{lem:cameron_martin}.
\end{proof}
\begin{lemma}\label{lem:1_is_in_the_support_of_gmc}
  We have $1 \in \Supp\{\Law(\wick{e^{\gamma X}})\}$.
\end{lemma}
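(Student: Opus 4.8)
The plan is to imitate the proof of Lemma \ref{lem:key_lemma}: I will exhibit smooth (hence Cameron--Martin) shifts that transport $\wick{e^{\gamma X}}$ to the constant function $1$, and then conclude via Lemma \ref{lem:cameron_martin_for_gmc} together with the fact that the support of a measure is closed. Set $X_n \defby \Pi_n X$ and $\rconst_n \defby \expect[X_n(x)^2]$, and consider the shift by the \emph{random but smooth} function $\psi_n \defby \frac{\gamma}{2}\rconst_n - X_n$. Since $\wick{e^{\gamma X}} = \lim_m e^{\gamma X_m - \frac{\gamma^2}{2}\rconst_m}$ in $\sobolev^{-\beta}$ and multiplication by the smooth function $e^{\gamma \psi_n}$ is a homeomorphism of $\sobolev^{-\beta}$ (as recalled before Lemma \ref{lem:cameron_martin_for_gmc}), a short computation gives, almost surely,
\[
  T_{\psi_n}\wick{e^{\gamma X}} = e^{\frac{\gamma^2}{2}\rconst_n} e^{-\gamma X_n}\wick{e^{\gamma X}} = \lim_{m \to \infty} e^{\gamma(X_m - X_n) - \frac{\gamma^2}{2}(\rconst_m - \rconst_n)} =: \wick{e^{\gamma Y_n}},
\]
the Gaussian multiplicative chaos of the high-frequency field $Y_n \defby X - X_n$. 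Thus it suffices to prove $\wick{e^{\gamma Y_n}} \to 1$ as $n \to \infty$.

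The analytic heart of the argument is to show $\lim_{n\to\infty} \expect[\norm{\wick{e^{\gamma Y_n}} - 1}_{\sobolev^{-\beta}}^2] = 0$. Because $\expect[\wick{e^{\gamma Y_n}}] = 1$, the left-hand side equals $\sum_{k \in \Z^2} (1 + 4\pi^2 \abs{k}^2)^{-\beta}\operatorname{Var}(\inp{\wick{e^{\gamma Y_n}}}{e_k})$, and the second-moment formula for Wick exponentials, $\expect[\wick{e^{\gamma Y_n}}(x)\wick{e^{\gamma Y_n}}(y)] = e^{\gamma^2 q_n(x-y)}$, turns this into
\[
  \sum_{k \in \Z^2} (1 + 4\pi^2 \abs{k}^2)^{-\beta}\, \widehat{K_n}(k), \qquad K_n \defby e^{\gamma^2 q_n} - 1,
\]
where $q_n(z) \defby \expect[Y_n(x)Y_n(x+z)]$ is the translation-invariant covariance of $Y_n$. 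The key structural fact is that $q_n$ is exactly the logarithmically singular covariance of $X$ with its singularity \emph{cut off at scale} $1/n$: one has $q_n(z) \approx \frac{1}{4\pi}\log_+ \frac{1}{n\abs{z}}$, so that $K_n$ is essentially supported on $\{\abs{z} \lesssim 1/n\}$, where it behaves like $(n\abs{z})^{-\gamma^2/4\pi} - 1$. Recognizing $\sum_k (1 + 4\pi^2\abs{k}^2)^{-\beta}\widehat{K_n}(k) = \int_{\torus}\green^{(\beta)}(z) K_n(z)\,dz$, with $\green^{(\beta)}(z) \sim \abs{z}^{-(2-2\beta)}$ the Bessel kernel of order $2\beta$, a change of variables $w = nz$ produces a bound of order $n^{-2\beta}\int_{\abs{w}<1}\abs{w}^{-(2-2\beta+\gamma^2/4\pi)}\,dw$. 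This integral is finite \emph{precisely} when $\beta > \frac{\gamma^2}{8\pi}$, which is exactly the standing ($L^2$-phase) assumption; hence the whole expression is $O(n^{-2\beta}) \to 0$. Making the estimates on $q_n$ uniform in $n$, and thereby justifying the scaling heuristic, is the step I expect to be the main obstacle, although it requires only standard upper and lower bounds on the truncated Green's function.

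With this $L^2$-convergence in hand, I pass to a subsequence along which $\wick{e^{\gamma Y_n}} \to 1$ almost surely in $\sobolev^{-\beta}$, and fix an $\omega$ on this event for which additionally $\wick{e^{\gamma X}}(\omega) \in \Supp\{\Law(\wick{e^{\gamma X}})\}$ (which holds almost surely, since any random variable lies in the support of its law). For each $n$ the function $\psi_n(\omega) = \frac{\gamma}{2}\rconst_n - X_n(\omega)$ is a fixed smooth function, so Lemma \ref{lem:cameron_martin_for_gmc} applied to the homeomorphism $T_{\psi_n(\omega)}$ shows that $\Supp\{\Law(\wick{e^{\gamma X}})\}$ is invariant under $T_{\psi_n(\omega)}$; consequently $T_{\psi_n(\omega)}\wick{e^{\gamma X}}(\omega) = \wick{e^{\gamma Y_n}}(\omega) \in \Supp\{\Law(\wick{e^{\gamma X}})\}$. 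Letting $n \to \infty$ and using that the support is closed yields $1 \in \Supp\{\Law(\wick{e^{\gamma X}})\}$, as desired.
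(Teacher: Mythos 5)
Your proof has the same skeleton as the paper's: the shift $\psi_n = \frac{\gamma}{2}\rconst_n - X_n$ is exactly the paper's $-h_N - X_N$ with $h_N = -\frac{\gamma \rconst_N}{2}$; your quantity $\int_{\torus} \green_{\beta}(z) K_n(z)\,dz$ with $K_n = e^{\gamma^2 q_n} - 1$ is precisely the paper's expression \eqref{eq:limit_of_greens} (you reach it via the second-moment formula for Wick exponentials instead of the chaos-by-chaos computation, which is equivalent); and your endgame (a.s.\ convergent subsequence, a.s.\ membership of $\wick{e^{\gamma X}}$ in the support, invariance of the support under $T_h$ for each fixed smooth $h$ via Lemma \ref{lem:cameron_martin_for_gmc}, closedness of the support) is a correct and in fact more carefully spelled out version of the paper's terse final paragraph. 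The gap is in the analytic heart, i.e.\ in proving that \eqref{eq:limit_of_greens} tends to $0$.

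The step you defer as ``standard upper and lower bounds on the truncated Green's function'' is exactly the crux, and it is not standard in this setting, because $X_n = \Pi_n X$ is a \emph{sharp} spectral cutoff. The kernel of $\Pi_n$ is the Dirichlet kernel of the disk, $\sum_{\abs{m} \leq n} e_m$, which is not an approximate identity (its $L^1(\torus)$-norm grows like $n^{1/2}$), so the uniform two-sided estimate $q_n(z) \approx \frac{1}{4\pi}\log_+ \frac{1}{n\abs{z}}$ does not follow from convolution bounds; it requires genuine oscillatory lattice-sum estimates, and even the weaker statement $\sup_n \sup_{\abs{z} \geq 1/n} \abs{q_n(z)} < \infty$ is of this nature. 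Moreover, boundedness of $q_n$ off the ball $\{\abs{z} \lesssim 1/n\}$ is not enough for your scaling argument: it only gives $\abs{K_n} \leq e^{\gamma^2 C} - 1$ there, so the far-field integral does not vanish at any rate, and the claimed $O(n^{-2\beta})$ bound fails; you would need quantitative decay of $q_n(z)$ as $n\abs{z} \to \infty$, yet another oscillatory estimate. The paper is structured precisely to avoid all pointwise control of $\green - \green^{(N)}$: it uses only that $\green^{(N)} \to \green$ in $L^2$, which gives $e^{\gamma^2(\green - \green^{(N)})} - 1 \to 0$ in measure with respect to $\nu_{\beta}(dxdy) = \green_{\beta}(x,y)\,dxdy$, together with the uniform integrability bound
\begin{equation*}
  \sup_{N} \int_{\torus \times \torus} e^{\lambda \gamma^2 (\green - \green^{(N)})}\, d\nu_{\beta}
  \leq \int_{\torus \times \torus} e^{\lambda \gamma^2 \green}\, d\nu_{\beta} < \infty,
  \qquad \lambda \in (1, 8\pi\beta\gamma^{-2}),
\end{equation*}
where the first inequality is a positivity trick in Fourier space (dropping the constraints $\abs{n_j} > N$ only increases the nonnegative convolution sums) and the finiteness uses the bound on the \emph{full} Green's function cited from \cite{MR99}; Vitali's convergence theorem then concludes. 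So to complete your proof you must either actually prove the uniform covariance estimates for the sharp Fourier truncation (a nontrivial piece of harmonic analysis) or replace the scaling step by this measure-theoretic argument, in which the $L^2$-phase assumption $\beta > \frac{\gamma^2}{8\pi}$ enters in the same way.
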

\begin{proof}
  We define $X_N, \rconst_N$ as above and set $h_N \defby - \frac{\gamma \rconst_N}{2}$.
  We observe
  \begin{equation*}
    T_{-h_N - X_N} \wick{e^{\gamma X}} = \lim_{M \to \infty} \wick{e^{\gamma (X_M - X_N)}}
  \end{equation*}
  in $L^2(\P; \sobolev^{-\beta})$.
  The key is to prove
  \begin{equation}\label{eq:e^gamma_to_1}
    \lim_{N \to \infty} \lim_{M \to \infty} \expect[ \norm{\wick{e^{\gamma(X_M - X_N)}}-1}_{\sobolev^{-\beta}}^2 ] = 0.
  \end{equation}

  The following computation is in the spirit of \cite[Theorem 2.2]{HKK19}.
  Set $\mu_m \defby 1 + 4\pi^2 \abs{m}^2$ for $m \in \Z^2$. We have
  \begin{equation*}
    \expect[ \norm{\wick{e^{\gamma(X_M - X_N)}} - 1}_{\sobolev^{-\beta}}^2 ]
    = \sum_{m \in \Z^2} \mu_m^{-\beta} \expect[ \abs{\inp{\wick{e^{\gamma(X_M -X_N)}} - 1}{e_m}}^2]
  \end{equation*}
  and
  \begin{equation*}
    \expect[\abs{\inp{\wick{e^{\gamma(X_M-X_N)}}-1}{e_m}}^2]
    = \sum_{k=1}^{\infty} \frac{\gamma^{2k}}{(k!)^2} \expect[ \abs{\inp{(X_M - X_N)^{:k:}}{e_m}}^2].
  \end{equation*}
  We observe
  \begin{align*}
    \MoveEqLeft \expect[ \abs{\inp{(X_M - X_N)^{:k:}}{e_m}}^2] \\
    &= \int_{\torus \times \torus} dx dy e_m(x) \conj{e_m}(y) \expect[ (X_M(x) - X_N(x))^{:k:} (X_M(y) - X_N(y))^{:k:}] \\
    &= k!\int_{\torus \times \torus} dx dy e_m(x) \conj{e_m}(y) \big(\expect[ (X_M(x) - X_N(x)) (X_M(y) - X_N(y))] \big)^k \\
    &= k! \int_{\torus \times \torus} dx dy e_m(x) \conj{e_m}(y) \Big( \sum_{N < \abs{n} \leq M} \frac{e_n(x) \conj{e_n}(y)}
    {2\mu_n} \Big)^k \\
    &= k! \sum_{\substack{N < \abs{n_1}, \ldots, \abs{n_k} \leq M \\ n_1 + \cdots + n_k = m}}
    \prod_{j=1}^k \frac{1}{2\mu_{n_j}}.
  \end{align*}

  Therefore, we obtain
  \begin{equation*}
    \lim_{M \to \infty} \expect[ \norm{\wick{e^{\gamma(X_M - X_N)}}-1}_{\sobolev^{-\beta}}^2 ]
    = \sum_{m \in \Z^2} \sum_{k=1}^{\infty} \frac{\gamma^{2k}}{k!} \mu_m^{-\beta}
    \sum_{\substack{N < \abs{n_1}, \ldots, \abs{n_k} \\ n_1 + \cdots + n_k = m}} \prod_{j=1}^k (2\mu_{n_j})^{-1},
  \end{equation*}
  which equals
  \begin{equation}\label{eq:integral_of_green_functions}
    \sum_{k=1}^{\infty} \frac{\gamma^{2k}}{k!} \int_{\torus \times \torus}
    \Big\{ \sum_{m \in \Z^2} \mu_m^{-\beta} e_m(x) \conj{e_m}(y) \Big\}
    \Big\{ \frac{1}{2} \sum_{N < \abs{m}} \mu_m^{-1} e_m(x) \conj{e_m}(y) \Big\}^k dxdy.
  \end{equation}
  If we set
  \begin{equation*}
    \green_{\beta}(x, y) \defby  \sum_{m \in \Z^2} \mu_m^{-\beta} e_m(x) \conj{e_m}(y)
    \quad \mbox{and} \quad \green^{(N)}(x, y) \defby \frac{1}{2} \sum_{\abs{m} \leq N} \mu_m^{-1} e_m(x) \conj{e_m}(y),
  \end{equation*}
  \eqref{eq:integral_of_green_functions} equals
  \begin{equation}\label{eq:limit_of_greens}
     \int_{\torus \times \torus} \green_{\beta}(x, y) (e^{\gamma^2(\green(x, y) - \green^{(N)}(x, y))} - 1) dxdy.
  \end{equation}

  Hence the proof of \eqref{eq:e^gamma_to_1} comes down to proving convergence of \eqref{eq:limit_of_greens} to $0$.
  According to \cite[Lemma 5.2]{MR99}, we have
  \begin{align*}
    0 \leq \green_{\beta}(x, y) \lesssim_{\beta} 1 + \abs{x-y}^{2\beta - 2} \quad \mbox{and} \quad
    0 \leq \green(x, y) \leq C + \frac{1}{4\pi}\log_+( \abs{x-y}^{-1} ).
  \end{align*}
  If we set $\nu_{\beta}(dxdy) \defby \green_{\beta}(x, y) dx dy$, $\nu_{\beta}$ is a finite measure on $\torus \times \torus$.
  Furthermore, since $\lim_{N \to \infty} \green^{(N)} = \green$ in $L^2(\torus \times \torus, dxdy)$, we have
  \begin{equation*}
    \lim_{N \to \infty} e^{\gamma^2 (\green - \green^{(N)})} - 1 = 0 \quad \mbox{in $\nu_{\beta}$-measure.}
  \end{equation*}
  Thanks to Vitali's convergence theorem, the proof of \eqref{eq:limit_of_greens}
  converging to $0$ is complete if we show
  \begin{equation}\label{eq:finite_integral_green}
    \sup_{N \geq 1} \int_{\torus \times \torus} e^{\lambda \gamma^2 (\green(x,y) - \green^{(N)}(x, y))}  \nu_{\beta}(dxdy) < \infty
  \end{equation}
  for some $\lambda \in (1, \infty)$.

  We compute
  \begin{align*}
    \MoveEqLeft
     \int_{\torus \times \torus} e^{\lambda \gamma^2 (\green(x, y) - \green^{(N)}(x, y))} \nu_{\beta}(dxdy) \\
    &= \sum_{m \in \Z^2} \sum_{k=0}^{\infty} \frac{(\lambda \gamma^2)^k}{k!} \mu_m^{-\beta}
    \sum_{\substack{N < \abs{n_1}, \ldots, \abs{n_k} \\ n_1 + \cdots + n_k = m}} \prod_{j=1}^k (2\mu_{n_j})^{-1} \\
    &\leq \sum_{m \in \Z^2} \sum_{k=0}^{\infty} \frac{(\lambda \gamma^2)^k}{k!} \mu_m^{-\beta}
    \sum_{ n_1 + \cdots + n_k = m} \prod_{j=1}^k (2\mu_{n_j})^{-1} \\
    &=  \int_{\torus \times \torus} e^{\lambda \gamma^2 \green(x,y)} \green_{\beta}(x,y) dxdy \\
    &\lesssim \int_{\torus \times \torus} e^{\lambda \gamma^2\{ C + \frac{1}{4\pi} \log_+(\abs{x-y}^{-1})\}}
    ( 1 + \abs{x-y}^{2\beta - 2}) dxdy \\
    &\lesssim \int_{\torus \times \torus} \abs{x-y}^{-\frac{\lambda \gamma^2}{4 \pi} + 2\beta - 2} dx dy.
  \end{align*}
  This is finite provided $\frac{\lambda \gamma^2}{4 \pi} + 2 - 2 \beta < 2$, or
  $\lambda \in (1, 8\pi \beta \gamma^{-2})$.
  Thus we complete the proof of \eqref{eq:finite_integral_green}, hence the proof of \eqref{eq:e^gamma_to_1}.

  Now suppose $\mu \in \Supp\{\Law(\wick{e^{\gamma X}})\}$. The above argument shows that there exists a sequence
  $\{\tilde{h}_n\}_{n=1}^{\infty} \subset C^{\infty}(\torus)$ with
  $T_{\tilde{h}_n} \mu$ converging to $1$ in $\sobolev^{-\beta}$.
  By Lemma \ref{lem:cameron_martin_for_gmc}, we conclude $1 \in \Supp\{\Law(\wick{e^{\gamma X}})\}$.
\end{proof}
\begin{theorem}\label{thm:support_of_gmc}
  We have
  \begin{equation}\label{eq:support_of_gmc}
    \Supp\{\Law(\wick{e^{\gamma X}})\} = \set{f \in \sobolev^{-\beta} \given f \geq 0}.
  \end{equation}
\end{theorem}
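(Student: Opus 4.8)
The plan is to prove the two inclusions in \eqref{eq:support_of_gmc} separately. For $\subseteq$, I would first note that the cone $\set{f \in \sobolev^{-\beta} \given f \geq 0}$ is closed in $\sobolev^{-\beta}$: for each nonnegative $\phi \in C^{\infty}(\torus)$ the map $f \mapsto \inp{f}{\phi}$ is $\sobolev^{-\beta}$-continuous, so $\{f : \inp{f}{\phi} \geq 0\}$ is a closed half-space and the cone is their intersection. Each approximant $\wick{e^{\gamma X_n}} = e^{\gamma X_n - \frac{\gamma^2}{2}\rconst_n}$ is a strictly positive smooth function, hence lies in the cone; since $\wick{e^{\gamma X_n}} \to \wick{e^{\gamma X}}$ in $L^2(\P; \sobolev^{\beta})$ and therefore, using the continuous embedding $\sobolev^{\beta} \hookrightarrow \sobolev^{-\beta}$, in $L^2(\P; \sobolev^{-\beta})$, a subsequence converges almost surely in $\sobolev^{-\beta}$. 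Thus $\wick{e^{\gamma X}} \geq 0$ almost surely and $\Law(\wick{e^{\gamma X}})$ is supported in the cone.

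For $\supseteq$, the mechanism mirrors the proof of Theorem \ref{thm:main}. Since $T_h$ is a homeomorphism of $\sobolev^{-\beta}$, one has $\Supp\{\Law(T_h \wick{e^{\gamma X}})\} = T_h\big(\Supp\{\Law(\wick{e^{\gamma X}})\}\big)$, so Lemma \ref{lem:cameron_martin_for_gmc} upgrades to the invariance $T_h\big(\Supp\{\Law(\wick{e^{\gamma X}})\}\big) = \Supp\{\Law(\wick{e^{\gamma X}})\}$ for every smooth $h$. Combined with Lemma \ref{lem:1_is_in_the_support_of_gmc}, this gives $e^{\gamma h} = T_h 1 \in \Supp\{\Law(\wick{e^{\gamma X}})\}$ for all $h \in C^{\infty}(\torus)$. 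As $\gamma \neq 0$, the substitution $h = \gamma^{-1}\log g$ shows that $\{e^{\gamma h} : h \in C^{\infty}(\torus)\}$ is exactly the set of strictly positive smooth functions, all of which therefore belong to the closed support.

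It remains to show that every nonnegative $f \in \sobolev^{-\beta}$ is an $\sobolev^{-\beta}$-limit of strictly positive smooth functions. I would mollify: take a nonnegative smooth approximate identity $\rho_n$ on $\torus$, for which $\widehat{\rho_n}(m) \to 1$ for each $m$ and $\abs{\widehat{\rho_n}(m)} \leq 1$, and set $g_n \defby f \conv \rho_n + \epsilon_n$ with $\epsilon_n \downarrow 0$. Then $g_n$ is smooth, and $(f \conv \rho_n)(x) = \inp{f}{\rho_n(x - \cdot)} \geq 0$ because $\rho_n(x - \cdot)$ is a nonnegative test function, so $g_n > 0$. Writing
\begin{equation*}
  \norm{f \conv \rho_n - f}_{\sobolev^{-\beta}}^2 = \sum_{m \in \Z^2} (1 + 4\pi^2 \abs{m}^2)^{-\beta} \abs{\hat{f}(m)}^2 \abs{\widehat{\rho_n}(m) - 1}^2,
\end{equation*}
dominated convergence, with dominating term $4(1 + 4\pi^2\abs{m}^2)^{-\beta}\abs{\hat{f}(m)}^2$, yields $f \conv \rho_n \to f$, while $\norm{\epsilon_n}_{\sobolev^{-\beta}} = \epsilon_n \to 0$. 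Chaining
\begin{equation*}
  \set{f \in \sobolev^{-\beta} \given f \geq 0} \subseteq \overline{\{e^{\gamma h}\}} \subseteq \Supp\{\Law(\wick{e^{\gamma X}})\} \subseteq \set{f \in \sobolev^{-\beta} \given f \geq 0}
\end{equation*}
then gives the claimed equality.

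I expect the density step to be the only genuinely technical point. One must check that the mollification converges in the \emph{negative} norm $\sobolev^{-\beta}$, which requires controlling the full Fourier series rather than merely each mode, and one must ensure the approximants stay inside $\sobolev^{-\beta}$; the latter is a real constraint, since for $\beta < 1$ the cone is a proper subset of the nonnegative measures on $\torus$, as Dirac masses are excluded. Everything else follows directly from Lemmas \ref{lem:cameron_martin_for_gmc} and \ref{lem:1_is_in_the_support_of_gmc}.
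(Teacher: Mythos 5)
Your proposal is correct and follows essentially the same route as the paper: the inclusion $\subseteq$ via positivity of the smooth approximants $e^{\gamma X_N - \gamma^2 \rconst_N/2}$, and $\supseteq$ by combining Lemma \ref{lem:cameron_martin_for_gmc} with Lemma \ref{lem:1_is_in_the_support_of_gmc} to place every $e^{\gamma h}$, $h \in C^{\infty}(\torus)$, in the support. In fact you are slightly more complete than the paper, which jumps from $e^{\gamma h} \in \Supp\{\Law(\wick{e^{\gamma X}})\}$ to $\mathcal{Y}_2 \subset \mathcal{Y}_1$ without comment: your mollification argument $f \conv \rho_n + \epsilon_n$ supplies exactly the density of strictly positive smooth functions in $\set{f \in \sobolev^{-\beta} \given f \geq 0}$ (together with the closedness of that cone) that the paper's final step tacitly requires.
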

\begin{proof}
    Let $\mathcal{Y}_1$ and $\mathcal{Y}_2$ be the left hand side and the right hand side of \eqref{eq:support_of_gmc}
    respectively. We set $X_N \defby \Pi_N X$ and $\rconst_N \defby \expect[ X_N(x)^2 ]$ as before.
    Since
    \begin{equation*}
      \lim_{N \to \infty} e^{\gamma X_N - \frac{\gamma^2 \rconst_N}{2}} = \wick{e^{\gamma X}}
      \quad \mbox{in } L^2(\P; \sobolev^{-\beta}),
    \end{equation*}
    we have $\mathcal{Y}_1 \subset \mathcal{Y}_2$.

    Let $h \in C^{\infty}(\torus)$ and $U$ be a neighborhood of $e^{\gamma h}$.
    Since $T_{-h} U$ is a neighborhood of $1$, we have
    \begin{equation*}
      \P( \wick{e^{\gamma X}} \in U) = \P( T_{-h} \wick{e^{\gamma X}} \in T_{-h} U) > 0
    \end{equation*}
    by Lemma \ref{lem:cameron_martin_for_gmc} and Lemma \ref{lem:1_is_in_the_support_of_gmc}. Therefore $e^{\gamma h} \in \mathcal{Y}_1$, hence
    $\mathcal{Y}_2 \subset \mathcal{Y}_1$.
\end{proof}
\begin{remark}
  We only consider the case $\gamma \in (-\sqrt{8\pi}, \sqrt{8\pi})$, where
  the Gaussian multiplicative chaos has the second moment.
  However, the theory of Gaussian multiplicative chaos extends to
  the case $\gamma \in (-\sqrt{16 \pi}, \sqrt{16\pi})$. See \cite{Kah85} or \cite{RV14}.
  We expect similar results hold in this general setting.
  The first step is to identify a natural Banach space $\wick{e^{\gamma X}}$ lives in.
  We leave this for a future study.
\end{remark}
\appendix
\section{Estimates of the additive stochastic heat equation}\label{sec:estimate_of_she}
In this appendix, we provide technical estimates of the additive stochastic heat equation
which are needed in Section \ref{sec:support}.
\begin{definition}
  For symmetric functions $K_1, K_2: \Z^2 \to \R_+$, we set
  \begin{equation*}
    K_1 \conv K_2 (m) \defby \sum_{l \in \Z^2} K_1(m-l) K_2(l)
  \end{equation*}
  and for $N \in \N \cup \{\infty\}$ we set $(K_1)_n(m) \defby K_1(m) \indic_{\{\abs{m} \leq N\}}$.
\end{definition}
\begin{lemma}\label{lem:kernel_estimate}
  Suppose $\alpha, \beta \in (0, 1)$ satisfy $\alpha + \beta > 1$.
  Let $K_1, K_2:\Z^2 \to \R_+$ be symmetric functions such that
  \begin{equation*}
    K_1(m) \leq \frac{C}{(1+\abs{m}^2)^{\alpha}},
    \qquad K_2(m) \leq \frac{C}{(1+\abs{m}^2)^{\beta}}.
  \end{equation*}
  Then, there exists a constant $C' = C'(\alpha, \beta, C) \in (0, \infty)$ such that
  \begin{equation*}
    K_1 \conv K_2 (m) \leq \frac{C'}{(1+\abs{m}^2)^{\alpha+\beta-1}}
  \end{equation*}
  and
  \begin{equation*}
    \abs{K_1 \star K_2 (m) - K_1 \star (K_2)_n(m)} \leq \frac{C'}{(1+\max\{\abs{m}, N\}^2)^{\alpha+\beta-1}}
  \end{equation*}
  for every $m \in \Z^2$ and $N \in \N$.
\end{lemma}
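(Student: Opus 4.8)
The plan is to treat this as the standard convolution estimate for polynomially decaying lattice weights and to deduce both inequalities from a single tail bound. Writing $w_s(l) \defby (1+\abs{l}^2)^{-s}$, the hypotheses give $K_1(m-l)\,K_2(l) \leq C^2 w_\alpha(m-l)\,w_\beta(l)$, so it suffices to control, for each $N \in \N$, the tail sum $S_N(m) \defby \sum_{\abs{l} > N} w_\alpha(m-l)\, w_\beta(l)$ and to show $S_N(m) \lesssim_{\alpha,\beta} (1+\max\{\abs{m},N\}^2)^{-(\alpha+\beta-1)}$. Indeed, since the $K_i$ are nonnegative, $K_1 \conv K_2(m) - K_1 \conv (K_2)_N(m) = \sum_{\abs{l} > N} K_1(m-l) K_2(l) \leq C^2 S_N(m)$, which is exactly the second inequality; the first inequality is the case $N=0$, after adding back the single omitted term $l=0$, whose contribution is $\lesssim w_\alpha(m) \lesssim w_{\alpha+\beta-1}(m)$ because $\beta < 1$.

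The heart of the argument is the full convolution bound $S_0(m) \lesssim w_{\alpha+\beta-1}(m)$. First I would record two elementary lattice sums, obtained by comparison with radial integrals: $\sum_{\abs{l} \leq R} w_s(l) \lesssim_s R^{2-2s}$ for $s < 1$, and $\sum_{\abs{l} > R} w_s(l) \lesssim_s R^{2-2s}$ for $s > 1$ (both for $R \geq 1$). Assuming $\abs{m} \geq 1$, I would then split $\Z^2$ into four regions: (a) $\abs{l} \leq \abs{m}/2$, where $\abs{m-l} \geq \abs{m}/2$ so the sum is $\lesssim w_\alpha(m)\sum_{\abs{l}\leq\abs{m}/2} w_\beta(l) \lesssim \abs{m}^{-2\alpha}\abs{m}^{2-2\beta}$ using $\beta < 1$; (b) $\abs{m-l} \leq \abs{m}/2$, symmetric, giving $\lesssim \abs{m}^{-2\beta}\abs{m}^{2-2\alpha}$ using $\alpha < 1$; (c) $\abs{l} > 2\abs{m}$, where $\abs{m-l} \geq \abs{l}/2$ so the summand is $\lesssim w_{\alpha+\beta}(l)$ and the tail sums to $\lesssim \abs{m}^{2-2(\alpha+\beta)}$ using $\alpha+\beta > 1$; and (d) the remaining annulus $\abs{m}/2 < \abs{l} \leq 2\abs{m}$ with $\abs{m-l} > \abs{m}/2$, which has $O(\abs{m}^2)$ lattice points on each of which the summand is $\lesssim w_{\alpha+\beta}(m)$, giving $\lesssim \abs{m}^{2}\abs{m}^{-2(\alpha+\beta)}$. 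Every region yields $\lesssim \abs{m}^{-2(\alpha+\beta-1)} \asymp w_{\alpha+\beta-1}(m)$. For bounded $\abs{m}$ the claim is immediate from the uniform finiteness of $S_0$, which holds since the summand decays like $\abs{l}^{-2(\alpha+\beta)}$ with $\alpha+\beta>1$.

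For the general tail bound I would set $R \defby \max\{\abs{m},N\}$ and distinguish two cases. If $N \leq 2\abs{m}$, then $R \asymp \abs{m}$ and $S_N(m) \leq S_0(m) \lesssim w_{\alpha+\beta-1}(m) \asymp (1+R^2)^{-(\alpha+\beta-1)}$, so this case is subsumed by the previous step. If $N > 2\abs{m}$, then every $l$ with $\abs{l} > N$ satisfies $\abs{m-l} \geq \abs{l} - \abs{m} > \abs{l}/2$, hence the summand is $\lesssim w_{\alpha+\beta}(l)$, and the second lattice sum with $s = \alpha+\beta > 1$ gives $S_N(m) \lesssim \sum_{\abs{l}>N} w_{\alpha+\beta}(l) \lesssim N^{2-2(\alpha+\beta)} \asymp (1+R^2)^{-(\alpha+\beta-1)}$. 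Combining the two cases proves the tail estimate and hence both inequalities.

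I do not anticipate a genuine obstacle: the estimate is entirely elementary. The only points requiring care are the bookkeeping of the region decomposition so that the four pieces cover $\Z^2$, the routine passage from discrete lattice sums to radial integrals (valid for $\abs{m} \geq 1$, with bounded $\abs{m}$ treated separately), and keeping track of the comparability $R \asymp \abs{m}$ in the first case of the tail bound. All implied constants depend only on $\alpha$, $\beta$ and $C$, as asserted.
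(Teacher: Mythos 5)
Your proof is correct. There is nothing in the paper to compare it with step by step, because the paper does not prove this lemma internally at all: its ``proof'' is a citation to Lemma~C.2 of Tsatsoulis--Weber \cite{TW18}. Your argument is the standard self-contained route to such discrete convolution estimates, and it holds up under scrutiny: the reduction of both inequalities to the single tail sum $S_N(m)=\sum_{\abs{l}>N} w_\alpha(m-l)w_\beta(l)$ is legitimate (the difference $K_1\conv K_2 - K_1\conv (K_2)_N$ is a sum of nonnegative terms over $\abs{l}>N$, so the absolute value is harmless, and the add-back of the $l=0$ term in the case $N=0$ uses $\beta\le 1$ correctly); the four regions $\abs{l}\le\abs{m}/2$, $\abs{m-l}\le\abs{m}/2$, $\abs{l}>2\abs{m}$, and the intermediate annulus do cover $\Z^2$, and each produces the exponent $-2(\alpha+\beta-1)$ using exactly the hypotheses $\alpha<1$, $\beta<1$, $\alpha+\beta>1$ where you invoke them; and the two-case analysis for the cut-off ($N\le 2\abs{m}$ via monotonicity $S_N\le S_0$ and $R\asymp\abs{m}$, versus $N>2\abs{m}$ via $\abs{m-l}>\abs{l}/2$ and the convergent tail with $s=\alpha+\beta>1$) is sound. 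What your approach buys is that the paper becomes self-contained on this point, at the cost of a page of routine lattice-sum bookkeeping; what the paper's citation buys is brevity, deferring the identical style of argument (region decomposition of a discrete convolution) to the appendix of \cite{TW18}, where it is carried out in essentially the same way.
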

\begin{proof}
  See \cite[Lemma C.2]{TW18}.
\end{proof}
\begin{lemma}\label{lem:fin_dim_approxim_of_she}
  Let $T \in (0, \infty)$. For every $k, l \in \N$, there exists $\epsilon \in (0, 1)$ such that
  \begin{equation*}
    \expect[ \sup_{0 \leq t \leq T} \norm{Z_n^{:k:}(t) (Z_m^{:l:}(t) - Z_n^{:l:}(t))}_{\contisp^{-\alpha}}^2 ]
    \lesssim_{\alpha, T, k, l} n^{-\epsilon}
  \end{equation*}
  for every $m > n$.
\end{lemma}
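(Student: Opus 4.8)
The plan is to decompose the product into finitely many homogeneous Wiener chaoses, reduce the supremum estimate to the second moments of single Fourier modes by hypercontractivity and a Besov embedding, and then read off the decay $n^{-\epsilon}$ from the high--frequency structure of $Z_m^{:l:} - Z_n^{:l:}$ via Lemma~\ref{lem:kernel_estimate}.

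First I would express the Wick powers as It\^o--Wiener integrals. Writing $g_n \defby \kernel_n(t-\cdot, x-\cdot)$, so that $Z_n(t,x) = I_1(g_n)$ and $Z_n^{:k:}(t,x) = I_k(g_n^{\otimes k})$ with $I_j$ the $j$--fold It\^o--Wiener integral, the difference telescopes as
\[
  Z_m^{:l:}(t,x) - Z_n^{:l:}(t,x) = \sum_{i=1}^{l} I_l\big( g_m^{\otimes(i-1)} \otimes (g_m - g_n) \otimes g_n^{\otimes(l-i)} \big),
\]
so that every summand carries exactly one factor $g_m - g_n$ whose spatial Fourier support lies in the shell $\{\, n < \abs{p} \leq m \,\}$. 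Applying the multiplication formula (\cite[\largethm 7.33]{Jan97}) to $Z_n^{:k:}(Z_m^{:l:} - Z_n^{:l:})$ then produces a finite sum of chaoses of orders $k+l-2r$, $0 \le r \le \min\{k,l\}$, where the $r$--fold contraction pairs $r$ legs $g_n$ of the first factor with $r$ legs of the second. The algebraic key is that a contraction of a leg $g_n$ (frequencies $\le n$) against the factor $g_m - g_n$ (frequencies $> n$) vanishes by orthogonality of distinct Fourier modes; hence in every surviving term the factor $g_m - g_n$ stays \emph{uncontracted} and becomes an output leg, while each realised contraction contributes the constant $\rconst_n = O(\log n)$.

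Next I would pass to second moments. By stationarity the field has uncorrelated Fourier modes, and because each chaos component lies in a fixed Wiener chaos, equivalence of moments (hypercontractivity, \cite[Chapter 5]{Jan97}) controls all $L^{2p}(\P)$ norms by the second moments $\expect[\abs{\inp{F(t)}{e_{m'}}}^2]$. Combining this with the embedding $\besov^{-\alpha + 1/p}_{2p, 2p} \hookrightarrow \contisp^{-\alpha}$ and the Besov version of Kolmogorov's continuity theorem (\cite[\largelem 9 and \largelem 10]{MW2dim}, \cite[Section 2.1]{TW18}), it suffices to bound these second moments, and the analogous ones for the increment $F(t) - F(s)$, with summable frequency decay and a factor $n^{-\epsilon}$. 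Writing $K(p) \defby (2(1 + 4\pi^2 \abs{p}^2))^{-1} \lesssim (1 + \abs{p}^2)^{-1}$, the Wiener--It\^o isometry expresses $\expect[\abs{\inp{F(t)}{e_{m'}}}^2]$ as a sum of multiple convolutions of $K$ over free leg frequencies summing to $m'$, with exactly one factor restricted to the shell $\{\, n < \abs{p} \le m \,\}$ --- exactly the structure computed in the proof of Lemma~\ref{lem:1_is_in_the_support_of_gmc}. Iterating the first bound of Lemma~\ref{lem:kernel_estimate} on the free legs and the second (difference) bound on the shell factor, and lowering the exponent of $K$ slightly below $1$ to meet the hypothesis $\alpha + \beta > 1$, bounds this by $(\log n)^{\min\{k,l\}}(1 + \max\{\abs{m'}, n\}^2)^{-\delta}$ for some fixed $\delta > 0$.

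Finally I would sum against the Besov weight: splitting $(1 + \max\{\abs{m'}, n\}^2)^{-\delta} \le (1 + \abs{m'}^2)^{-(\delta - \epsilon')}(1 + n^2)^{-\epsilon'}$ pulls out $n^{-2\epsilon'}$ while leaving a spatial decay summable against the $\contisp^{-\alpha}$ scaling because $\alpha + \delta > 1$ leaves room for a small $\epsilon' > 0$; the logarithmic factor is absorbed by the polynomial gain. Running the same argument on the time increments, where differentiating the heat kernel in $t$ yields a H\"older factor $\abs{t-s}^{\theta}$, feeds Kolmogorov's criterion and produces the supremum bound. I expect the main obstacle to be this third step: tracking the single shell--restricted factor through every contraction and calibrating the exponents so that Lemma~\ref{lem:kernel_estimate} applies --- the borderline decay $(1+\abs{p}^2)^{-1}$ of $K$ forcing small losses --- while simultaneously retaining summability against the $\contisp^{-\alpha}$ weight and a strictly positive power $\epsilon$.
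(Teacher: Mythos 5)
Your proposal is correct and follows essentially the same route as the paper: reduce the ordinary product to Wiener--chaos components via the multiplication formula (with the contractions producing the harmless $\rconst_n = O(\log n)$ factors), compute second moments of Fourier/Littlewood--Paley modes by the It\^o--Wiener isometry as convolutions of the kernel $K(p) \lesssim (1+\abs{p}^2)^{-1}$, extract the $n^{-\epsilon}$ decay from Lemma \ref{lem:kernel_estimate}, and conclude with time-increment estimates and the Besov--Kolmogorov criterion (which is where the paper hides the hypercontractivity you invoke explicitly). The only difference is bookkeeping --- you telescope $Z_m^{:l:}-Z_n^{:l:}$ and track a single shell-supported leg through vanishing contractions, whereas the paper computes the covariances of $Z_n^{:k:}\wickproduct Z_m^{:l:}$ directly and subtracts the kernels $K_n^{\conv k}\conv K_m^{\conv l} - K_n^{\conv(k+l)}$ --- but these are the same computation organized differently.
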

\begin{proof}
  Although the proof is in the spirit of \cite[Appendix E]{TW18},
  we provide a complete proof below.
  We first note that it suffices to prove
  \begin{equation*}
    \expect[ \sup_{0 \leq t \leq T} \norm{Z^{:k:}_n(t) \wickproduct (Z_m^{:l:}(t) - Z_n^{:l:}(t))}_{\contisp^{-\alpha}} ]
    \lesssim n^{-\epsilon}
  \end{equation*}
  for some $\epsilon \in (0, 1)$, since the multiplication formula(\cite[\largethm 7.33]{Jan97}) implies
  \begin{equation*}
    Z_n^{:k:}(t, x) Z_m^{:l:}(t, x)
    = \sum_{r=0}^{\min\{k,l\}} \rconst_n^r Z_n^{:k-r:}(t, x) \wickproduct Z_m^{:l-r:}(t, x),
  \end{equation*}
  where $\rconst_n \defby \int_{\R \times \torus} \kernel_n(s, y)^2 ds dy \sim \log n$.

  We next compute
  \begin{equation*}
    A \defby \expect[ \inp{Z_n^{:k:}(t_1) \wickproduct Z_m^{:l:}(t_1)}{\phi}
    \inp{Z_n^{:k:}(t_2) \wickproduct Z_m^{:l:}(t_2)}{\phi}].
  \end{equation*}
  By It\^o isometry, $A$ equals
  \begin{multline*}
    \int_{(\R \times \torus)^{k+l}} \left( \int_{\torus} \prod_{j=1}^k \kernel_n(t_1-s_j, x_1-y_j) \prod_{j=k+1}^{k+l}
    \kernel_m(t_1-s_j, x_1-y_j) \phi(x_1) dx_1 \right) \\
    \times \left( \int_{\torus} \prod_{j=1}^k \kernel_n(t_2-s_j, x_2-y_j) \prod_{j=k+1}^{k+l} \kernel_m(t_2-s_j, x_2-y_j) \phi(x_2) dx_2
    \right) \prod_{j=1}^{k+l} ds_j dy_j.
  \end{multline*}
  By changing the order of integrations, we obtain
  \begin{multline*}
    A = \int_{\torus \times \torus} dx_1 dx_2 \phi(x_1) \phi(x_2)
    \left( \int_{-\infty}^{\min\{t_1, t_2\}} \kernel_n(t_1+t_2-2s, x_1-x_2) ds \right)^k \\
    \times \left( \int_{-\infty}^{\min\{t_1, t_2\}} \kernel_m(t_1+t_2-2s, x_1-x_2) ds \right)^l,
  \end{multline*}
  which, applying Plancherel theorem, equals
  \begin{equation*}
    \sum_{\substack{\abs{p_1}, \ldots, \abs{p_k} \leq n \\ \abs{p_{k+1}}, \ldots, \abs{p_{k+l}} \leq m}}
    \abs{\hat{\phi}(p_1 + \cdots p_{k+l})}^2  \prod_{j=1}^{k+l} \frac{e^{-I_{p_j} \abs{t_1-t_2}}}{2 I_{p_j}},
  \end{equation*}
  where $I_p \defby 1 + 4 \pi^2 \abs{p}^2$.
  By change of variables, we conclude
  \begin{equation*}
    A = \sum_{p \in \Z^2} \abs{\hat{\phi}(p)}^2 \left\{K_n(t_1-t_2)^{\conv k} \conv K_m(t_1 - t_2)^{\conv l}
    \right\}(p),
  \end{equation*}
  where $K(t, p) \defby (2I_p)^{-1} e^{-I_p \abs{t}}$ and $K_n(t) \defby (K(t))_n$.

  Similar computations yield
  \begin{align*}
    \expect[ \inp{Z_n^{:k+l:}(t_1)}{\phi} \inp{Z_n^{:k+l:}(t_2)}{\phi}]
    &= \expect[ \inp{Z_n^{:k+l:}(t_1)}{\phi} \inp{Z_n^{:k:}(t_2) \wickproduct Z_m^{:l:}(t_2)}{\phi} ] \\
    &= \expect[ \inp{Z_n^{:k:}(t_1) \wickproduct Z_m^{:l:}(t_1)}{\phi} \inp{Z_n^{:k+l:}(t_2)}{\phi}  ] \\
    &= \sum_{p \in \Z^2} \abs{\hat{\phi}(p)}^2 K_n(t_1-t_2)^{\conv (k+l)}(p).
  \end{align*}

 Consequently, we have
 \begin{align*}
   \MoveEqLeft[3] \expect[\abs{ \inp{Z_n^{:k:}(t) \wickproduct (Z_m^{:l:}(t) - Z_n^{:l:}(t))}{\eta_k(x - \cdot)}}^2 ] \\
   &= \sum_{p \in \Z^2} \abs{\chi_k(p)}^2 \left\{
   K_n(0)^{\conv k} \conv K_m(0)^{\conv l}(p) - K_n^{\conv (k+l)}(p) \right\} \\
   &\lesssim \frac{1}{(1+n^2)^{\epsilon}}\sum_{p \in \Z^2} \frac{\abs{\chi_k(p)}^2}{(1 + \abs{p}^2)^{\beta}}
   \lesssim \frac{2^{-2k(1-\beta)}}{(1 + n^2)^{\epsilon}},
 \end{align*}
 where $\epsilon, \beta \in (0, 1)$ with $\epsilon + \beta < 1$.
 We applied Lemma \ref{lem:kernel_estimate} to derive the inequality above.
 Similarly,
 \begin{align*}
   \MoveEqLeft \expect[ \abs{ \inp{Z_n^{:k:}(t_1) \wickproduct (Z_m^{:l:}(t_1) - Z_n^{:l:}(t_1)) -
   Z_n^{:k:}(t_2) \wickproduct (Z_m^{:l:}(t_2) - Z_n^{:l:}(t_2))}{\eta_k(x-\cdot)}}^2 ] \\
   \begin{split}
     {}&= 2\left[ \sum_{p \in \Z^2} \abs{\chi_k(p)}^2 \Big\{ \big(K_n(0)^{\conv k} \conv K_m(0)^{\conv l}\big) (p) -
     K_n(0)^{\conv(k+l)}(p) \Big\} \right. \\
     &\,\, \left. - \sum_{p \in \Z^2}
     \abs{\chi_k(p)}^2  \Big\{ \big(K_n(t_1-t_2)^{\conv k} \conv K_m(t_1-t_2)^{\conv l}\big) (p) -
     K_n(t_1-t_2)^{\conv(k+l)}(p) \Big\} \right]
   \end{split} \\
   {}& \lesssim \abs{t_1-t_2}^{\epsilon} \sum_{p \in \Z^2} \frac{\abs{\chi_k(p)}^2}{(1 + \abs{p}^2)^{\beta}}
   \lesssim \abs{t_1-t_2}^{\epsilon} 2^{-2k(1-\beta)}.
 \end{align*}
 We again applied Lemma \ref{lem:kernel_estimate} by noting
 \begin{equation*}
   \abs{K_n(t,m) - K_n(0,m)} \lesssim_{\gamma} t^{\gamma} (1 + \abs{m}^2)^{\gamma - 1}
 \end{equation*}
 for $\gamma \in (0, 1)$.
 Therefore, we finally obtain
 \begin{multline*}
   \expect[ \abs{ \inp{Z_n^{:k:}(t_1)(Z_m^{:l:}(t_1) - Z_n^{:l:}(t_1)) -
   Z_n^{:k:}(t_2)(Z_m^{:l:}(t_2) - Z_n^{:l:}(t_2))}{\eta_k(x-\cdot)}}^2 ] \\
   \lesssim \abs{t_1 - t_2}^{\frac{\epsilon}{2}} (1 + n^2)^{-\frac{\epsilon}{2}} 2^{-2k(1-\beta)}.
 \end{multline*}
 Applying a Besov space version of Kolmogorov continuity theorem, we complete the proof.
\end{proof}
\begin{corollary}\label{cor:fin_dim_approxim_of_she}
  Let $T \in (0, \infty)$. For every $k \in \N$, we have
  \begin{equation*}
    \lim_{n \to \infty} \expect[ \sup_{0 \leq t \leq T} \norm{Z^{:k:}(t) - Z^{:k:}_n(t)}_{\contisp^{-\alpha}}^2 ] = 0.
  \end{equation*}
\end{corollary}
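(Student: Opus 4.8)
The plan is to read off the corollary from Lemma \ref{lem:fin_dim_approxim_of_she} by a Cauchy-sequence argument, exploiting that the zeroth Wick power is trivial. Since $H_0 \equiv 1$, we have $Z_n^{:0:}(t,x) = H_0(Z_n(t,x), \rconst_n) = 1$ identically, so specializing Lemma \ref{lem:fin_dim_approxim_of_she} to the first index equal to $0$ and relabeling the remaining power as $k$ yields
\begin{equation*}
  \expect\Big[ \sup_{0 \leq t \leq T} \norm{Z_m^{:k:}(t) - Z_n^{:k:}(t)}_{\contisp^{-\alpha}}^2 \Big] \lesssim_{\alpha, T, k} n^{-\epsilon}
\end{equation*}
for every $m > n$, with some $\epsilon \in (0,1)$. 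First I would note that this bound, being uniform in $m > n$, shows that $\{Z_n^{:k:}\}_{n=1}^{\infty}$ is a Cauchy sequence in the Banach space $L^2(\P; C([0,T]; \contisp^{-\alpha}))$; by completeness it converges in this space to some limit $Y$.

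It then remains to identify $Y$ with $Z^{:k:}$ as fixed by Definition \ref{def:def_of_white_noise_etc}. For this I would test against a fixed $\phi \in C^{\infty}(\torus)$ and a fixed $t \in [0,T]$. The convergence $Z_n^{:k:} \to Y$ in $L^2(\P; C([0,T]; \contisp^{-\alpha}))$ gives $\inp{Y(t)}{\phi} = \lim_{n \to \infty} \inp{Z_n^{:k:}(t)}{\phi}$ in $L^2(\Omega)$. On the other hand, $\inp{Z_n^{:k:}(t)}{\phi}$ is the $k$-fold multiple It\^o--Wiener integral of the kernel $(s_1, y_1, \ldots, s_k, y_k) \mapsto \inp{\prod_{j=1}^{k} \kernel_n(t-s_j, \cdot - y_j)}{\phi}$, whereas $\inp{Z^{:k:}(t)}{\phi}$ is the same integral with $\kernel$ in place of $\kernel_n$. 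Since $\kernel_n = \Pi_n \kernel$ converges to $\kernel$ in $L^2(\R \times \torus)$, the associated $k$-fold kernels converge in $L^2((\R \times \torus)^k)$ (by dominated convergence on the Fourier side, exactly as in the covariance computations inside the proof of Lemma \ref{lem:fin_dim_approxim_of_she}), so the It\^o--Wiener isometry forces $\inp{Z_n^{:k:}(t)}{\phi} \to \inp{Z^{:k:}(t)}{\phi}$ in $L^2(\Omega)$. Hence $\inp{Y(t)}{\phi} = \inp{Z^{:k:}(t)}{\phi}$ almost surely for every such $t$ and $\phi$, and since both $Y$ and $Z^{:k:}$ are continuous $\contisp^{-\alpha}$-valued processes this yields $Y = Z^{:k:}$, which completes the proof.

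The essential content is already packaged in Lemma \ref{lem:fin_dim_approxim_of_she}, so the remaining argument is soft: the hard part is not an obstacle here but rather the point of care, namely making sure the limit produced abstractly by completeness coincides with the object $Z^{:k:}$ singled out by Definition \ref{def:def_of_white_noise_etc}. This is precisely why I pass through the multiple It\^o--Wiener integral representation and match finite-dimensional distributions, rather than relying on the Cauchy estimate alone.
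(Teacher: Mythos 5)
Your proposal is correct and matches the paper's intent: the paper's own proof of Corollary \ref{cor:fin_dim_approxim_of_she} is simply the assertion that it follows from Lemma \ref{lem:fin_dim_approxim_of_she}, and your argument---specializing the lemma to first index $0$ (using $Z_n^{:0:}\equiv 1$, allowed since $0\in\N$ here) to get the Cauchy estimate, invoking completeness of $L^2(\P;C([0,T];\contisp^{-\alpha}))$, and identifying the limit with $Z^{:k:}$ via the multiple It\^o--Wiener isometry---is exactly the natural way to fill in that one-line proof. The care you take in identifying the abstract limit with the object of Definition \ref{def:def_of_white_noise_etc}(iii) is a detail the paper leaves implicit, but it is not a different route.
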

\begin{proof}
  This easily follows form Lemma \ref{lem:fin_dim_approxim_of_she}.
\end{proof}
\section*{Acknowledgements}
The author would like to thank  Prof. Yuzuru Inahama and Dr. Masato Hoshino
for helpful comments.
\printbibliography
\end{document}